 %------------------------------------------------------------------------------
% Beginning of journal.tex
%------------------------------------------------------------------------------
%
% AMS-LaTeX version 2 sample file for journals, based on amsart.cls.
%
%        ***     DO NOT USE THIS FILE AS A STARTER.      ***
%        ***  USE THE JOURNAL-SPECIFIC *.TEMPLATE FILE.  ***
%
% Replace amsart by the documentclass for the target journal, e.g., tran-l.
%

\documentclass[12pt,leqno]{amsart}

%     If your article includes graphics, uncomment this command.
\usepackage{graphicx}
\usepackage{pinlabel} %for algebraic and geometric topology
\usepackage{amsmath}
\usepackage{amsfonts}
\usepackage{amssymb}
\usepackage{mathtools}
\usepackage{amscd}
\usepackage[all,cmtip]{xy}
\usepackage{amsthm}
\usepackage{comment}
%\usepackage{anysize}
%\usepackage{dsfont}
%\usetikzlibrary{matrix}
\usepackage{epsfig}
\usepackage[utf8]{inputenc}
\usepackage[colorinlistoftodos]{todonotes}
\usepackage[capitalise]{cleveref}
\usepackage{geometry}
 \geometry{
 a4paper,
 total={160mm,240mm},
 left=20mm,
 top=30mm,
 }
 
%%%%%%%%%%%%%% Necessary for \listoftodos to work

\makeatletter
\providecommand\@dotsep{5}
\def\listtodoname{List of Todos}
\def\listoftodos{\@starttoc{tdo}\listtodoname}
\makeatother

%%%%%%% ENTORNOS DE TEOREMAS %%%%%%%%%%%%%%%%%%%%%%%%%%%%

\newtheorem{theorem}{Theorem}[section]

\newtheorem{corollary}[theorem]{Corollary}
\newtheorem{lemma}[theorem]{Lemma}

  \theoremstyle{definition}
\newtheorem{definition}[theorem]{Definition}

\newtheorem{notation}[theorem]{Notation}
\newtheorem{remark}[theorem]{Remark}
\newtheorem{question}[theorem]{Question}

%%%%%%%%%%% COMANDOS   %%%%%%%%%%%%%%%%%%%%%%%%%%

%\newcommand{\Irr}{\operatorname{Irr}}
% LETRAS DOBLES

\newcommand{\dbZ}{\mathbb{Z}}

\newcommand{\Z}{\mathbb{Z}}

% LETRAS  CALIGRÁFICAS

\newcommand{\calF}{{\mathcal F}}

\newcommand{\calG}{{\mathcal G}}

\newcommand{\calM}{{\mathcal M}}

% TEORÍA K

% CATEGORÍA DE ÓRBITAS

% FAMILIAS DE SUBGRUPOS

\newcommand{\fin}{\mathcal{FIN}}

%\newcommand{\TRIV}{\text\textit{TR }}

% ENTORNO EQUATION
\newcommand{\beq}{\begin{equation}}
\newcommand{\eeq}{\end{equation}}
%\newcommand{\beq}{\begin{equation*}}
%\newcommand{\eeq}{\end{equation*}}

%EXTENSIONES

%%%%%%%%%%%%%  FP_n notation

\DeclareMathOperator{\Irr}{Irr}

\DeclareMathOperator{\rank}{rank}
\DeclareMathOperator{\Ind}{Ind}

%\theoremstyle{theorem}
%\newtheorem*{relfinite}{Theorem}
%\newtheorem*{relvc}{Theorem}

%\numberwithin{equation}{section}

%    Absolute value notation

%    Blank box placeholder for figures (to avoid requiring any
%    particular graphics capabilities for printing this document).
%\newcommand{\blankbox}[2]{%
%  \parbox{\columnwidth}{\centering
%    Set fboxsep to 0 so that the actual size of the box will match the
%    given measurements more closely.
%    \setlength{\fboxsep}{0pt}%
%    \fbox{\raisebox{0pt}[#2]{\hspace{#1}}}%
%  }%
%}

%%%%%%%%%%%% COMIENZA EL DOCUMENTO %%%%%%%%%%%%%%%%%%%%%%%%

\begin{document}

\title[]{On the $K$-theory of groups of the form $\dbZ^n\rtimes \dbZ/m$ with $m$ square-free}

%\thanks{Support information for the second author.}

%    General info
\subjclass[2020]{}

%    Information for first author
\author{Luis Jorge S\'anchez Salda\~na}
\address{Departamento de Matem\' aticas, Facultad de Ciencias, Universidad Nacional Aut\'onoma de M\'exico, M\'exico D.F., Mexico}
\email{luisjorge@ciencias.unam.mx}
%    \thanks will become a 1st page footnote.
%\thanks{The author was supported by DGAPA-UNAM postdoctoral grant.}

%    Information for second author
\author{Mario Velásquez}
\address{Departamento de Matem\'aticas\\Facultad de Ciencias\\Universidad Nacional de Colombia, sede Bogot\'a\\Cra. 30 cll 45 - Ciudad Universitaria\\ Bogot\'a, Colombia}
\email{mavelasquezme@unal.edu.co}

\date{}

%\dedicatory{This paper is dedicated to our advisors.}

%\keywords{$K$-Theory, Farrell-Jones conjecture, 3-manifold groups, Whitehead groups}

\begin{abstract}
We provide an explicit computation of the topological $K$-theory groups $K_*(C_r^*(\Z^n\rtimes \Z/m))$ of semidirect products of the form $\dbZ^n\rtimes \dbZ/m$ with $m$ square-free. We want to highlight the fact that we are not impossing any conditions on the $\Z/m$-action on $\Z^n$. This generalizes previous computations of Lück-Davis and Langer-Lück.
\end{abstract}
\maketitle

\section{Introduction}

Given a discrete group $\Gamma$, it is a classical problem in noncommutative geometry to compute the topological $K$-theory of the reduced $C^*$-algebra $C^*_r(\Gamma)$. The Baum-Connes conjecture states that $K_*(C^*_r(\Gamma))$ is isomorphic to the $\Gamma$-equivariant $K$-homology group $K_*^\Gamma(\underline{E}\Gamma)$, where $\underline{E}\Gamma$ is the classifying space of $\Gamma$ for proper actions. Once the conjecture is true for $\Gamma$, the isomorphisms allows us to use tools from algebraic topology to compute $K_*(C^*_r(\Gamma))$.

\begin{notation}\label{notation}
    Let $\{p_1,\dots ,p_\ell\}$ be a nonempty set of distinct primes numbers and denote $m=p_1\cdots p_\ell$. Throughout the paper $G$ will denote the group $\Z/p_1 \times \cdots \times \Z/p_\ell\cong \Z/m$. From now on we fix $p\in \{p_1,\dots ,p_\ell\}$ and we denote by $G_p$ the quotient $G/(\Z/p)$.
\end{notation}

In this paper we work semidirect products of the form $\Z^n\rtimes G$, where the $G$-action of $\Z^n$ does not have any restrictions. Our main result is an explicit computation of the topological $K$-theory of groups of the reduced $C^*$-algebra $C^*_r(\Z^n\rtimes G)$. In order to achieve this, we first prove in \Cref{thm:main:torsionfree} that $K_*(C^*_r(\Z^n\rtimes G))$ is a finitely generated and torsion free (which is the main result of Section 3), thus it is enough to compute the rank of this abelian group which is done in the following theorem (which is the main result of Section 4).

\begin{theorem}[\Cref{main:rank:formula}]\label{Intro:main:rank:formula}
 Let $h$ be a fixed generator of $G$, and let $t\geq 1$. Let $e_s$ be the $s$-th elementary symmetric polynomial in $n$-variables. Denote by $\bar y_{G}^t$ (resp. $\bar y_{G_p}^t$) the $n$-tuple of eigenvalues (including multiplicities) of $h^t\otimes \mathrm{Id}:\Z^n\otimes\mathbb C\to \Z^n\otimes\mathbb C$ (resp. $h^{pt}\otimes \mathrm{Id}$).
    Then, for $\alpha\in\{0,1\}$, we have
\begin{align*}
    \rank(K_\alpha(C_r^*(\Z^n\rtimes G))=\left(\frac{1}{m/p}\sum_{s-\alpha\text{ even }}\left(\sum_{t=1}^{m/p}e_s(\bar{y}_{G_p}^t)\right)\right)^{k_l}+\frac{1}{m}\sum_{s-\alpha\text{ even }}\left(\sum_{t=1}^{m}e_s(\bar{y}_G^t)\right)
\end{align*}
where $k_l=\frac{p\left(p^{\frac{l}{p-1}}-1\right)}{m}$ and $l=n-\rank((\Z^n)^{\Z/p})$.
\end{theorem}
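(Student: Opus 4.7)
The plan combines \Cref{thm:main:torsionfree} with the Baum--Connes conjecture and a Chern-character computation. Since $\Z^n\rtimes G$ is amenable (indeed virtually abelian), Baum--Connes yields
\begin{equation*}
K_\alpha(C_r^*(\Z^n\rtimes G))\;\cong\;K_\alpha^{\Z^n\rtimes G}\bigl(\underline{E}(\Z^n\rtimes G)\bigr),
\end{equation*}
and by the torsion-freeness proved in \Cref{thm:main:torsionfree} it suffices to compute the $\mathbb{C}$-dimension after tensoring with $\mathbb{C}$. Taking $\mathbb{R}^n$ as a model for the classifying space for proper actions and using freeness of the $\Z^n$-action, this reduces to computing $\dim_{\mathbb{C}}\bigl(K_G^*(T^n)\otimes_{\Z}\mathbb{C}\bigr)$, where $G$ acts on the torus $T^n=\mathbb{R}^n/\Z^n$ through its action on $\Z^n$.

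I would then decompose this complexified equivariant K-theory via the equivariant Chern character (à la Lück or Adem--Ruan): since $G$ is abelian,
\begin{equation*}
K_G^\alpha(T^n)\otimes_\Z\mathbb{C} \;\cong\; \bigoplus_{g\in G}\bigl(K^\alpha((T^n)^g)\otimes_\Z\mathbb{C}\bigr)^G.
\end{equation*}
The summand at $g=e$ is $(K^\alpha(T^n)\otimes_\Z\mathbb{C})^G$; combining the identification $K^*(T^n)\otimes_\Z\mathbb{C}\cong \Lambda^*(\Z^n\otimes_\Z\mathbb{C})$ with Molien's formula on each exterior power yields precisely the second summand $\frac{1}{m}\sum_{s-\alpha\text{ even}}\sum_{t=1}^m e_s(\bar y_G^t)$ of the statement.

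For $g\neq e$, square-freeness of $m$ implies every such element lies in a unique subgroup $\Z/p\leq G$ of prime order. The fixed subspace $(\Z^n)^{\Z/p}$ has corank $l$, and rationality of the nontrivial irreducible $\mathbb{Q}[\Z/p]$-representation forces $(p-1)\mid l$; an elementary-divisor computation for $h-1$ on the non-invariant $\Z[\zeta_p]$-summand then shows that $(T^n)^{\Z/p}$ is a disjoint union of $p^{l/(p-1)}$ subtori of dimension $n-l$, on each of which the residual group $G_p=G/(\Z/p)$ acts through its induced action on $(\Z^n)^{\Z/p}$. Applying Molien's formula on each component produces the factor $\frac{1}{m/p}\sum_{s-\alpha\text{ even}}\sum_{t=1}^{m/p}e_s(\bar y_{G_p}^t)$ inside the parentheses of the first summand, and accounting for the components together with the $p-1$ generators of $\Z/p$ is how the coefficient $k_l=p(p^{l/(p-1)}-1)/m$ enters.

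The main technical obstacle I anticipate is the final combinatorial assembly. In particular, the exponentiation $(\,\cdot\,)^{k_l}$ appearing in the statement (rather than a multiplication by $k_l$) suggests that the argument organizes the singular-sector contributions via a Künneth-type tensor decomposition of the $\Z/p$-fixed-point components under the $G_p$-action, rather than a purely additive orbit-counting sum. Verifying this decomposition, and checking that contributions indexed by different primes $p\mid m$ assemble without overlap or double-counting with the $g=e$ sector, is the crux of Section~4; once settled, the integral rank follows at once from the $\mathbb{C}$-dimension and the torsion-freeness of Section~3.
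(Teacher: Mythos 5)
Your route---reduce to $K_G^*(T^n)\otimes\mathbb{C}$ and expand via the delocalized equivariant Chern character over fixed-point sectors---is genuinely different from the paper's, and while the identity sector is handled correctly (it does give the second summand $\frac{1}{m}\sum_{s-\alpha\text{ even}}\sum_{t=1}^m e_s(\bar y_G^t)$, matching \cref{rank}), the proof has a real gap exactly where you flag it, and that gap is the theorem. The decomposition $\bigoplus_{g\in G}(K^*((T^n)^g)\otimes\mathbb{C})^G$ runs over \emph{all} $g\in G\cong\Z/m$; for $\ell\geq 2$ this includes elements of composite order $d\mid m$ and elements of prime order $q\neq p$, whose fixed sets involve $(\Z^n)^{\Z/d}$ and $(\Z^n)^{\Z/q}$ rather than $(\Z^n)^{\Z/p}$. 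Your sketch only analyzes the sectors attached to the one chosen prime $p$, yet the target formula is expressed entirely in terms of that single $p$; proving that the sum of \emph{all} nontrivial sectors collapses to $k_l$ times the $G_p$-coinvariant expression (and is independent of the choice of $p$) is precisely the assembly you defer. A second, more dangerous issue: you try to explain the superscript $k_l$ as genuine numerical exponentiation arising from a ``Künneth-type tensor decomposition.'' It is not. In the paper's argument the first summand arises as the rank of a \emph{direct sum} $\bigl(K_\alpha(B(\Z^n)^{\Z/p})_{G_p}\bigr)^{k_l}$ of $k_l$ copies, obtained from the identification $\bigoplus_{(P)}\widetilde R(P)\cong\Z[G_p]^{k_l}$ and Shapiro's lemma; the displayed formula should be read as multiplication of the rank by $k_l$. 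Chasing a tensor-product explanation of an exponent would send you after the wrong count.

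For comparison, the paper never touches fixed-point subtori of $T^n$. It first reduces from $G$ to $\Z/p$ via the $p$-local induction isomorphism of \cref{induction}, giving $\rank K_\alpha(C_r^*(\Z^n\rtimes G))=\rank\bigl(K_\alpha(C_r^*(\Z^n\rtimes\Z/p))_{G_p}\bigr)$; it then splits $K_*(C_r^*(\Z^n\rtimes\Z/p))$ by the Künneth-type \cref{kunneth} into $K^*(B(\Z^n)^{\Z/p})\otimes K^*_{\Z^l\rtimes\Z/p}(\underline{E}(\Z^l\rtimes\Z/p))$, feeds in the Davis--Lück exact sequence of \cref{NM} to isolate the representation-theoretic part $\bigoplus_{(P)}\widetilde R(P)$, identifies the latter as the permutation module $\Z[G_p]^{k_l}$ using \cite{SSV24}, and finishes with Shapiro's lemma, the Chern character, and the Molien-type count of \cref{rank}. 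If you want to salvage your approach, you would need to compute every sector $(T^n)^{\Z/d}$ for $d\mid m$ together with the residual $G/(\Z/d)$-action on its components, and verify the resulting global identity against the single-prime expression; you would also need to justify the passage from the equivariant $K$-homology of $\underline{E}(\Z^n\rtimes G)$ to the equivariant $K$-theory of $T^n$ (an equivariant Poincaré duality or UCT step, harmless rationally but not free).
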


Let us put our results into context. In \cite{DL13} Davis and Lück provide a computation of   $K_*(C^*_r(\Z^n\rtimes G))$ provided $G$ is a cyclic group of prime order and the $G$-action on $\Z^n$ is free outside the origin, i.e. $G$ acts freely in $\Z^n-\{0\}$. On the other hand, in \cite{LL12} Langer and Lück generalized the previous computation when $G$ is any cyclic group and still the action of $G$ on $\Z^n$ is free outside the origin. Our \Cref{thm:main:torsionfree} and \cref{Intro:main:rank:formula} generalize widely Lück--Davis and partially Lück--Langer results. The present paper can be considered as a sequel for \cite{SSV24} where a computation of the cohomology groups $H^*(\Z^n\rtimes G)$ was done, it is worth saying in that paper the computations are more intricate since these cohomology groups have torsion. Our computations are related to the $K$-theory the toroidal quotient orbifolds $T^n/G$, see \cite{ADG11}.

In view of our main results and those in \cite{LL12} we pose the following question.

\begin{question}\label{intro:question}
    Is $K_*(C^*_r(\Z^n\rtimes G))$ torsion free whenever $G$ is \emph{any} finite cyclic group and there is no restrictions for the $G$-action on $\Z^n$?
\end{question}

The above question seems to be related to a conjecture posed by Adem-Ge-Pan-Petrosyan in \cite{AGPP} about the collapse of the Lyndon-Hochshild-Serre spectral sequence for the chomology of $\Z^n\rtimes G$, which turned out to be false. The counterexamples for this conjecture might be a good source of counterexamples for \cref{intro:question}.

\subsection*{Acknowledgements.}
We are grateful for the financial support of DGAPA-UNAM grant PAPIIT IA106923.

\section{Preliminaries}

\subsection{Classifying spaces for families and push-out constructions}
 Given a group $\Gamma$, we say a collection $\calF$ of subgroups of $\Gamma$ is a \emph{family} if it is non-empty, closed under conjugation and under taking subgroups. Fix a group $\Gamma$ and a family $\calF$ of subgroups of $\Gamma$. We say that a $\Gamma$-CW-complex $X$ is a \emph{model for the classifying space} $E_{\calF}\Gamma$ if every isotropy  group of $X$ belongs to the family $\calF$ and the fixed point set $X^H$ is contractible whenever  $H$ belongs to $\calF$.

 We will need the following result of L\"uck-Weiermann:
 \begin{theorem} \cite[Corollary 2.8]{LW}\label{LW}
     Let $\calF\subseteq \calG$ be families of subgroups of a group $\Gamma$ such that every element in $\calG-\calF$ is contained in a unique maximal element in $\calG-\calF$. Let $\calM$ be a complete system of representatives of the conjugacy classes
of subgroups in $\calG-\calF$ which are maximal in $\calG-\calF$. Let $\mathcal{SUB}(M)$ be the family of subgroups of $M$. Then, there is a cellular $\Gamma$-pushout
\begin{equation}\label{pushout}
\xymatrix{\bigsqcup_{M\in\calM} \Gamma\times_{N_{\Gamma}M}E_{\calF\cap N_\Gamma M} (N_\Gamma M)\ar[r]^-i\ar[d]^{\lambda}&E_{\calF}(\Gamma)\ar[d]\\
\bigsqcup_{M\in\calM} \Gamma\times_{N_{\Gamma}M}E_{\mathcal{SUB}(M)\cup(\calF\cap N_\Gamma M)} (N_\Gamma M)\ar[r]&X
}
\end{equation}
such that $X$ is a model for $E_{\calG}(\Gamma)$.
 \end{theorem}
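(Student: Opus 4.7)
The plan is to show that the space $X$ defined by the pushout satisfies the two defining properties of a model for $E_{\calG}(\Gamma)$: (i) every isotropy subgroup of $X$ lies in $\calG$, and (ii) for every $H\in\calG$, the fixed-point set $X^H$ is contractible (and for $H\notin\calG$, $X^H=\emptyset$).

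First I would make explicit the combinatorics behind the indexing. Define an equivalence relation on $\calG-\calF$ by declaring $K\sim K'$ iff there exists $M\in\calG-\calF$ with $K,K'\leq M$; the uniqueness hypothesis is exactly what makes this transitive. The equivalence classes are then parametrized, up to $\Gamma$-conjugation, by $\calM$, and for each $M\in\calM$ the normalizer $N_\Gamma M$ is precisely the stabilizer of the class of $M$. This justifies inducing up from $N_\Gamma M$ to $\Gamma$ in the pushout. Condition (i) is then immediate: points in the top-right have isotropy in $\calF\subseteq\calG$; points in the bottom-left have isotropy conjugate to a subgroup lying in $\mathcal{SUB}(M)\cup(\calF\cap N_\Gamma M)\subseteq\calG$; and pushouts can only produce isotropies among those of the spaces being glued.

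The main technical step is condition (ii). Because the $\Gamma$-action is cellular and the horizontal map $i$ is a $\Gamma$-cofibration, taking $H$-fixed points in \eqref{pushout} yields a homotopy pushout of spaces
\begin{equation*}
\xymatrix@C=0.6em{
\bigsqcup_{M\in\calM}\bigl(\Gamma\times_{N_\Gamma M}E_{\calF\cap N_\Gamma M}(N_\Gamma M)\bigr)^H\ar[r]\ar[d] & (E_\calF\Gamma)^H \ar[d] \\
\bigsqcup_{M\in\calM}\bigl(\Gamma\times_{N_\Gamma M}E_{\mathcal{SUB}(M)\cup(\calF\cap N_\Gamma M)}(N_\Gamma M)\bigr)^H\ar[r] & X^H
}
\end{equation*}
and the standard formula identifies $(\Gamma\times_{N_\Gamma M}Y)^H$ with a disjoint union, indexed by the double cosets $gN_\Gamma M$ for which $g^{-1}Hg\leq N_\Gamma M$, of the fixed-point sets $Y^{g^{-1}Hg}$. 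I would then split into two cases. If $H\in\calF$, then for every relevant $g$, both $E_{\calF\cap N_\Gamma M}(N_\Gamma M)^{g^{-1}Hg}$ and $E_{\mathcal{SUB}(M)\cup(\calF\cap N_\Gamma M)}(N_\Gamma M)^{g^{-1}Hg}$ are contractible (since $g^{-1}Hg\in\calF\cap N_\Gamma M$ sits in either family), so the left vertical arrow is a weak equivalence and $X^H\simeq (E_\calF\Gamma)^H\simeq \mathrm{pt}$. If $H\in\calG-\calF$, then $(E_\calF\Gamma)^H=\emptyset$, and the uniqueness hypothesis forces exactly one conjugacy representative $M\in\calM$ and, up to the $N_\Gamma M$-action, exactly one coset $gN_\Gamma M$ to contribute; the left vertical arrow becomes the inclusion $\emptyset\hookrightarrow(E_{\mathcal{SUB}(M)\cup(\calF\cap N_\Gamma M)}(N_\Gamma M))^{g^{-1}Hg}$, and the right term is contractible because $g^{-1}Hg\in\mathcal{SUB}(M)$. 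In either case $X^H$ is contractible.

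The main obstacle is the double-coset analysis in the second case: one must check that among all $g\Gamma\to\Gamma/N_\Gamma M$ with $g^{-1}Hg\leq N_\Gamma M$, precisely those for which $g^{-1}Hg$ is contained in some conjugate (by $N_\Gamma M$) of $M$ yield non-empty fixed points, and that the uniqueness of the maximal element of $\calG-\calF$ containing $H$ both selects a single $M\in\calM$ and collapses the double cosets to a single contribution. It is exactly the uniqueness hypothesis that prevents multiple contractible pieces from being glued to $\emptyset$ along an empty set (which would otherwise give a non-contractible coproduct), and this is what makes the hypothesis indispensable rather than a convenience.
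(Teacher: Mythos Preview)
The paper does not give its own proof of this statement: it is quoted verbatim as \cite[Corollary~2.8]{LW} and used as a black box. So there is no ``paper's proof'' to compare against; your proposal is a sketch of what is essentially the original L\"uck--Weiermann argument (verify the universal property by analyzing $H$-fixed points of the pushout and splitting into the cases $H\in\calF$ and $H\in\calG-\calF$).

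Your outline is correct in spirit and identifies the right crux: in the case $H\in\calG-\calF$, the uniqueness hypothesis is precisely what guarantees a single contractible contribution on the lower-left rather than a disjoint union of several. Two small points to clean up. First, the indexing set for $\bigl(\Gamma\times_{N_\Gamma M}Y\bigr)^H$ is the set of left cosets $gN_\Gamma M$ with $g^{-1}Hg\leq N_\Gamma M$, not double cosets; the $H$-action on such cosets is already trivial, so no further quotient is needed. Second, in the case $H\in\calG-\calF$ you should also check that the \emph{top-left} corner has empty $H$-fixed points for every $M$ and every $g$ (which is immediate since $g^{-1}Hg\notin\calF$), so that the homotopy pushout really reads $\emptyset\leftarrow\emptyset\to\mathrm{pt}$. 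With those tweaks your argument goes through and matches the standard proof.
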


 The following lemma will be also useful.
\begin{lemma}\cite[Lemma~4.4]{DQR11}\label{lemma:union:families}
Let $\Gamma$ be a group and $\calF,\, \calG$ be two families of subgroups of $\Gamma$.  Choose arbitrary $\Gamma$-$CW$-models for $E_{\calF} \Gamma$, $E_{\calG}\Gamma$ and $E_{\calF\cap\calG} G$. Then, the $\Gamma$-$CW$-complex $X$ given by the cellular homotopy $\Gamma$-pushout
\[
\xymatrix{
E_{\calF\cap\calG}\Gamma \ar[r] \ar[d] & E_{\calF}\Gamma \ar[d]\\
E_{\calG}\Gamma \ar[r] & X
}
\]
is a model for $E_{\calF\cup\calG}\Gamma$.
\end{lemma}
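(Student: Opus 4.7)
The plan is to verify directly the two defining properties of a classifying space for the family $\calF\cup\calG$: namely, that every isotropy group of the $\Gamma$-CW-complex $X$ belongs to $\calF\cup\calG$, and that $X^H$ is contractible for every $H\in\calF\cup\calG$ (and empty otherwise, automatically from the first property). The construction of $X$ as a cellular $\Gamma$-pushout ensures it is itself a $\Gamma$-CW-complex, so both conditions make sense cell-by-cell and pointwise on fixed sets.

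For the isotropy condition, I would note that every cell of $X$ is either a cell coming from $E_\calF\Gamma$, a cell coming from $E_\calG\Gamma$, or (after identification) a cell originating in $E_{\calF\cap\calG}\Gamma$. The stabilizers of these cells belong respectively to $\calF$, $\calG$, and $\calF\cap\calG$, all of which are contained in $\calF\cup\calG$. This is essentially a bookkeeping step and should be routine.

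The substantive step is analyzing fixed point sets. The cellular homotopy $\Gamma$-pushout may be assumed to have one of the horizontal maps a $\Gamma$-cofibration, so applying the fixed-point functor $(-)^H$ yields a pushout of ordinary CW-complexes along a cofibration:
\[
\xymatrix{
(E_{\calF\cap\calG}\Gamma)^H \ar[r] \ar[d] & (E_{\calF}\Gamma)^H \ar[d]\\
(E_{\calG}\Gamma)^H \ar[r] & X^H.
}
\]
I would then split into cases according to which families contain $H$. If $H\in\calF\cap\calG$, all three corner spaces are contractible, so the pushout $X^H$ is contractible (the square is a homotopy pushout of contractible spaces). If $H\in\calF\setminus\calG$, then $(E_\calG\Gamma)^H=(E_{\calF\cap\calG}\Gamma)^H=\emptyset$ while $(E_\calF\Gamma)^H$ is contractible, so $X^H\simeq (E_\calF\Gamma)^H$ is contractible. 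The case $H\in\calG\setminus\calF$ is symmetric. Finally, if $H\notin\calF\cup\calG$, all four corner fixed sets are empty, so $X^H=\emptyset$, which is consistent with the isotropy condition.

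The main subtlety is ensuring that the fixed-point functor really does commute with the cellular homotopy $\Gamma$-pushout; this is a standard property of $\Gamma$-CW-pushouts along cofibrations, since $H$-fixed points of an induced cell $\Gamma/K\times D^n$ are determined by the subconjugacy relation between $H$ and $K$. Once that is granted, the case analysis above is essentially formal, and no hard homotopy-theoretic input is needed beyond the defining properties of the three given classifying spaces.
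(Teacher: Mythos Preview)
Your argument is correct and is the standard direct verification. Note, however, that the paper does not supply its own proof of this lemma: it is simply quoted from \cite[Lemma~4.4]{DQR11} and used as a black box. So there is no ``paper's proof'' to compare against; what you have written is essentially the argument one finds in the cited reference.

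One small remark on the case $H\in\calF\cap\calG$: you assert that a homotopy pushout of three contractible spaces is contractible. This is true, but it is worth saying why --- any map between contractible spaces is a weak equivalence, so by the homotopy invariance of homotopy pushouts the square is equivalent to the pushout of $\ast\leftarrow\ast\to\ast$. Everything else in your case analysis is straightforward.
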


\subsection{Splitting theorem of Angel-Becerra-Velásquez}
For the proof of our main result we will need a $K$-theoretic decomposition obtained in \cite{ABV}. Consider an extension of discrete groups 
\begin{equation}\label{ext-decomp}0\to A\to W \to Q\to0,\end{equation}where $A$ is finite. Let $X$ is a proper $Q$-CW-complex, then $X$ is naturally a proper $W$-CW-complex where $A$ acts trivially.

Let $\Irr(A)$ be the set of isomorphism classes of complex irreducible representations of $A$. The group $Q$ acts on $\Irr(A)$ by conjugation. Let $[\rho]\in\Irr(A)$, we denote by  by $Q_{[\rho]}$ to the isotropy group of $[\rho]$ under the $Q$-action.

%\begin{theorem}[Remark 2.3 and Prop. 2.4 in \cite{ABV}]
% There is a $U(1)$-central extension    
%\begin{equation}\label{s1-central}
%    0\to U(1)\to\widetilde{Q}_{[\rho]}\to Q_{[\rho]}\to 0.
%\end{equation}  Such that $\rho$ can be extended to a representation of $\Gamma_{[\rho]}$ if and only if the above extension splits. Moreover if $A$ is abelian the sequence always splits.
%\end{theorem}
%If $\widetilde{Q}_{[\rho]}$ is a group as in the extension (\ref{s1-central}) and $X$ is a proper $Q_{[\rho]}$-CW-complex, then we can define the $Q_{[\rho]}$-equivariant $\widetilde{Q}_{[\rho]}$-twisted K-theory groups of $X$ as the Grothendieck group of the monoid of isomorphism classes of $\widetilde{Q}_{[\rho]}$-vector bundles over $X$ where $U(1)$ acts by multiplication by inverses. Define for $n\geq0$ $${}^{\widetilde{Q}_{[\rho]}}{K}_{Q_{[\rho]}}^{-n}(X)=\ker\left({}^{\widetilde{Q}_{[\rho]}}{K}_{Q_{[\rho]}}(X\times S^n)\xrightarrow{i^*}{}^{\widetilde{Q}_{[\rho]}}{K}_{Q_{[\rho]}}(X)\right),$$ where $Q_{[\rho]}$ acts trivially on $S^n$ and $i(x)=(x,e_1)$ with $e_1$ the first element in the canonical basis of $\mathbb{R}^{n+1}$. The groups for $n< 0$ are defined using Bott periodicity.

%Note that when the extension (\ref{s1-central}) splits we have a canonical isomorphism
%$${}^{\widetilde{Q}_{[\rho]}}{K}_{Q_{[\rho]}}^{-n}(X)\cong K_{Q_{[\rho]}}^{-n}(X).$$
%We have the following result.

    As a direct consequence of Theorem 3.3 in \cite{ABV} we get the following result.

\begin{theorem}\label{decompintro}
		Let $W$ be a discrete group  as in extension (\ref{ext-decomp}) , and $X$ be a proper $Q$-CW-complex. Suppose that $A$ is abelian. There is a natural isomorphism 
		\begin{equation*}\Psi_X:K^*_W(X)\to\bigoplus_{[\rho]\in \Irr(A)}K^*_{Q_{[\rho]}}(X).\end{equation*} This isomorphism is functorial on $G$-maps $X\to Y$ of proper $W$-CW-complexes on which $A$ acts trivially. 
	\end{theorem}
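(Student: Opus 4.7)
The plan is to invoke Theorem~3.3 of \cite{ABV} and recognize the present statement as the specialization of that general splitting to the case of an abelian finite kernel. The key conceptual input is as follows: since $A$ acts trivially on $X$, every $W$-equivariant complex vector bundle $E\to X$ carries a fibrewise $A$-representation structure, producing an isotypic decomposition $E=\bigoplus_{[\rho]\in\Irr(A)} E_\rho$. Because $A$ is normal in $W$, the quotient $Q$ permutes these isotypic subbundles through its conjugation action on $\Irr(A)$.

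My first step would be to group summands by $Q$-orbits: for each orbit $\calO=Q\cdot[\rho]$, the subbundle $\bigoplus_{[\sigma]\in\calO} E_\sigma$ is $W$-invariant and, by a Mackey-type induction argument, is determined by the $W_{[\rho]}$-equivariant sub-bundle $E_\rho$, where $W_{[\rho]}\subset W$ denotes the preimage of the stabilizer $Q_{[\rho]}$. My second step would use the abelian hypothesis: each $[\rho]\in\Irr(A)$ is one-dimensional, so $A$ acts on $E_\rho$ by the scalar $\rho$, and the content of the ABV splitting is that this $W_{[\rho]}$-equivariant datum is captured by the untwisted equivariant K-theory $K^*_{Q_{[\rho]}}(X)$. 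Assembling over orbit representatives defines $\Psi_X$, and naturality on $W$-equivariant maps $f\colon X\to Y$ is then automatic, since the isotypic projections and the subsequent identifications are all functorial constructions on equivariant bundles with trivial $A$-action.

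The main obstacle, were one to reprove the splitting rather than cite \cite{ABV}, would be this last identification. In general one expects twisted equivariant K-theory to appear, with twist class determined by the Mackey obstruction in $H^2(Q_{[\rho]};U(1))$ pulled back from the extension $1\to A\to W_{[\rho]}\to Q_{[\rho]}\to 1$, and one must check that in the present setup this twist is K-theoretically trivial or is absorbed by the abelian hypothesis. Since Theorem~3.3 of \cite{ABV} carries this out for general finite abelian $A$, the present argument reduces to matching the hypotheses and translating notation.
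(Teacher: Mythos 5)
Your proposal takes essentially the same route as the paper, which offers no independent proof and simply records the statement as a direct consequence of Theorem~3.3 in \cite{ABV}; your additional discussion of the isotypic decomposition, the $Q$-orbit grouping, and the vanishing of the Mackey obstruction is a faithful sketch of what that cited theorem accomplishes, not a divergence from it. Citing \cite{ABV} and matching hypotheses (finite abelian normal $A$ acting trivially on the proper $Q$-CW-complex $X$) is exactly what the paper does.
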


Now we will extend the above result for equivariant $K$-homology for actions of finite groups. For this we need the following consequence of the equivariant version of the Spanier-Whitehead duality. For details see  Theorem III.4.1 and Remark III.4.2 in \cite{MLS}.

\begin{theorem}\label{SW-dual}Let $G$ be a finite group. Let $X$ be a finite $G$-CW-complex, suppose $X$ is equivariantly embedded as a neighborhood retract 
in a complex representation $V$. Then we have a natural isomorphism
\[
  K_*^G(X)\cong\widetilde{K}_G^{*-1}(S^V-X).  
\]
\end{theorem}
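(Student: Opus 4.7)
The plan is to deduce the theorem from the general equivariant Spanier--Whitehead (Atiyah/Alexander) duality applied to the $G$-ENR $X\subset V$, and then use Bott periodicity of complex equivariant $K$-theory to absorb the degree shift coming from $\dim_\dbR V$.

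First I would invoke the equivariant duality theorem of Lewis--May--Steinberger (their Theorem III.4.1): since $X$ is a finite $G$-CW-complex sitting $G$-equivariantly in the complex representation $V$ as a neighbourhood retract, $X$ is a compact $G$-ENR inside the one-point compactification $S^V=V^+$ that avoids the point at infinity. This theorem exhibits $X_+$ as a dualisable object in the $G$-equivariant stable homotopy category, with $V$-shifted Spanier--Whitehead dual equivalent to the complement $S^V\setminus X$ pointed at $\infty$, the duality pairing being the one induced by the Thom collapse $S^V\to S^V/(S^V\setminus X)$.

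Next I would apply this duality to complex equivariant $K$-theory. For any $G$-equivariant (co)homology theory the duality produces a natural isomorphism
\[
\tilde E^G_q(X_+) \;\cong\; \tilde E_G^{\dim_\dbR V - q - 1}(S^V\setminus X),
\]
which is precisely the equivariant Alexander duality in the sphere $S^V$: the shift by $-1$ is the usual connecting--homomorphism shift coming from the cofibration $S^V\setminus X\hookrightarrow S^V\to S^V/(S^V\setminus X)$. Finally, since $V$ is a \emph{complex} representation, $\dim_\dbR V=2\dim_\dbC V$ is even, and complex equivariant $K$-theory is $2$-periodic by Bott periodicity, so the exponent reduces to $-q-1\equiv q-1\pmod 2$, yielding $K^G_q(X)\cong\tilde K_G^{q-1}(S^V\setminus X)$. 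Naturality with respect to equivariant maps $X\to Y$ follows from the naturality of the duality pairing.

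The main obstacle is the first step itself: one must verify that a finite $G$-CW-complex embedded in $V$ as a $G$-ENR really is $V$-dualisable in the equivariant stable category with dual identifiable as $S^V\setminus X$. This requires equivariant tubular neighbourhoods, the equivariant Whitehead theorem, and a careful comparison of the abstract duality pairing with the concrete Thom collapse; it is the substantive content of Lewis--May--Steinberger Theorem III.4.1 and is used as a black box. Once this is granted, the remaining degree bookkeeping and the appeal to Bott periodicity are routine.
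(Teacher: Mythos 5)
Your proposal is correct and follows essentially the same route as the paper, which gives no proof of its own but simply cites Theorem III.4.1 and Remark III.4.2 of Lewis--May--Steinberger for exactly this duality statement. The one caveat is that your intermediate claim that \emph{any} equivariant cohomology theory satisfies Alexander duality with the integer shift $\dim_{\dbR}V-q-1$ is an overstatement---in general the dual is shifted by the representation $V$ itself, and one needs equivariant Bott periodicity for the complex representation sphere $S^V$ (i.e.\ the Thom isomorphism for the complex $G$-module $V$, not merely integer $2$-periodicity) to convert the $V$-shift into an integer shift---but for complex equivariant $K$-theory this is precisely what the Bott class provides, so your degree bookkeeping and final conclusion stand.
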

 We have the following decomposition for equivariant $K$-homology.

\begin{theorem}\label{decompintro-hom}
		Let $G$ be a finite group, let $A\subseteq G$ be a normal abelian subgroup, let us denote by $Q$ the quotient group. Let $X$ be a proper $Q$-CW-complex. Then there is a natural isomorphism 
		\begin{equation*}\Psi_X:K_*^G(X)\to\bigoplus_{[\rho]\in \Irr(A)}K_*^{Q_{[\rho]}}(X).\end{equation*} This isomorphism is functorial on $G$-maps $X\to Y$ of  $G$-CW-complexes on which $H$ acts trivially. 
	\end{theorem}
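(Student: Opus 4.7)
The plan is to deduce Theorem~\ref{decompintro-hom} from its cohomological counterpart Theorem~\ref{decompintro} by means of equivariant Spanier--Whitehead duality (Theorem~\ref{SW-dual}), after a reduction to the finite $G$-CW case.

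First I would reduce to the case where $X$ is a finite $G$-CW-complex. Since equivariant $K$-homology for proper actions commutes with directed colimits along closed inclusions of $G$-CW-subcomplexes, and since $\Irr(A)$ is finite (because $A$ is a finite abelian group), the general statement follows from the finite case by writing $X=\mathrm{colim}_\alpha X_\alpha$ with $X_\alpha$ a finite $Q$-CW-subcomplex, applying the isomorphism to each $X_\alpha$, and passing to the colimit. The naturality clause is precisely what makes this passage legitimate and also supplies the naturality assertion in the theorem.

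Next I would set up the duality. Since $A$ acts trivially on $X$, one can choose a finite-dimensional complex $Q$-representation $V$ into which the finite $Q$-CW-complex $X$ embeds equivariantly as a neighborhood retract. Inflating along $G\twoheadrightarrow Q$, one regards $V$ as a complex $G$-representation on which $A$ acts trivially; then $S^V-X$ is a proper $Q$-CW-complex, and hence a $G$-space on which $A$ acts trivially. Applying Theorem~\ref{SW-dual} (and its $Q_{[\rho]}$-equivariant analogue for each subgroup $Q_{[\rho]}\subseteq Q\subseteq \mathrm{Aut}(V)$, where the embedding $X\hookrightarrow V$ remains equivariant by restriction) gives natural isomorphisms
\begin{equation*}
K_*^G(X)\;\cong\;\widetilde{K}_G^{\,*-1}(S^V-X),\qquad
K_*^{Q_{[\rho]}}(X)\;\cong\;\widetilde{K}_{Q_{[\rho]}}^{\,*-1}(S^V-X).
\end{equation*}

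Now I would apply Theorem~\ref{decompintro} to the proper $W=G$-CW-complex $S^V-X$, on which the normal abelian subgroup $A$ acts trivially, obtaining a natural isomorphism
\begin{equation*}
\Psi_{S^V-X}\colon \widetilde{K}_G^{\,*-1}(S^V-X)\;\xrightarrow{\ \cong\ }\;\bigoplus_{[\rho]\in \Irr(A)}\widetilde{K}_{Q_{[\rho]}}^{\,*-1}(S^V-X).
\end{equation*}
Splicing the two displayed sets of isomorphisms produces the desired decomposition $\Psi_X\colon K_*^G(X)\to\bigoplus_{[\rho]}K_*^{Q_{[\rho]}}(X)$, and naturality on $G$-maps $X\to Y$ with trivial $A$-action is inherited from the corresponding naturality in Theorem~\ref{decompintro} (one chooses a common representation $V$ accommodating both $X$ and $Y$ to compare).

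The main obstacle I expect is technical rather than conceptual: verifying that the two appeals to Spanier--Whitehead duality are natural enough in the group variable to commute with the splitting $\Psi_{S^V-X}$ summand by summand, and checking that the isomorphism does not depend on the auxiliary choice of $V$. This is handled by a standard cofinality argument over the directed system of complex $Q$-representations containing $X$, again using that inflation from $Q$ makes $A$ act trivially on each such $V$, so that the duality statements for $G$ and for each $Q_{[\rho]}$ match up compatibly under the decomposition of Theorem~\ref{decompintro}.
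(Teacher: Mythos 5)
Your proposal is correct and follows essentially the same route as the paper: embed the finite $G$-CW-complex $X$ (on which $A$ acts trivially) into a complex $G$-representation $V$ with trivial $A$-action, apply Theorem~\ref{decompintro} to $S^V-X$, and transport the splitting back through the Spanier--Whitehead duality of Theorem~\ref{SW-dual}. The paper's proof is just these two sentences; your additional care about the colimit reduction to the finite case, the duality for each $Q_{[\rho]}$, and independence of the choice of $V$ fills in details the paper leaves implicit.
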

\begin{proof}
    Note that if $X$ is a finite $G$-CW-complex such that the $A$-action on $X$ is  trivial, then it can be embedded in a complex representation $V$ of $G$ where the subgroup $A$ acts trivially. Then we can apply Theorem \ref{decompintro} to $S^V-X$ and by Theorem \ref{SW-dual} we get the desired result.
\end{proof}

\section{Torsion freeness of $K_*(C_r^*(\Z^n\rtimes G))$}

In all the results throughout this section we follow the notation established in \Cref{notation}. The main goal of this section is to prove that the $K$-theory group $K_*(C_r^*(\Z^n\rtimes G))$ is torsion free. In order to achieve this, first we prove that, given a prime $p$ that divides $m$ the $K$-theory group $K_*^{\Z^n\rtimes \Z/p}(\underline{E}(\Z^n\rtimes \Z/p))$ is torsion free. Next, via \Cref{coro:induction}, we use the previously mentioned result to prove that $K_*^{\Z^n\rtimes G}(\underline{E}(\Z^n\rtimes G))$ is torsion free. The main result follows as a consequence of the Baum-Connes conjecture.

\subsection{Torsion freeness of $K_*^{\Z^n\rtimes \Z/p}(\underline{E}(\Z^n\rtimes \Z/p))$}
First specialize to the case $\Z^n\rtimes \Z/p$ considered as a subgroup of $\Z^n\rtimes G$. Considering $\Z^n$ as a $\Z/p$-module, we have the following extension of $\Z/p$-modules
$$0\to (\Z^n)^{\Z/p}\to \Z^n\xrightarrow{\pi} \Z^l \to0$$
where $\Z/p$-acts on $\Z^l$ freely outside the origin. Note that this extension does not split in general. Nevertheless,  we are able to prove a Künneth-type theorem in \Cref{kunneth}.

\begin{theorem}\label{kunneth}
    There is an isomorphism of $G_p$-modules$$K^*_{\Z^n\rtimes \Z/p}(\underline{E}(\Z^n\rtimes \Z/p))\cong K^*(B((\Z^n)^{\Z/p}))\otimes K^*_{\Z^l\rtimes\Z/p}(\underline{E}(\Z^l\rtimes\Z/p)).$$
\end{theorem}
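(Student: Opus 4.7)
The plan is to identify $K^*_{\Z^n\rtimes \Z/p}(\underline{E}(\Z^n\rtimes \Z/p))$ with the $\Z/p$-equivariant K-theory of the torus $T^n := \mathbb{R}^n/\Z^n$ and to exploit the bundle structure $T^n \to T^l$ coming from the $\Z/p$-fixed subgroup $H := (\Z^n)^{\Z/p}$. Since $\Z^n$ acts freely on $\mathbb{R}^n = \underline{E}(\Z^n\rtimes\Z/p)$ by translations, passage to the quotient gives $K^*_{\Z^n\rtimes\Z/p}(\mathbb{R}^n) \cong K^*_{\Z/p}(T^n)$; analogously $K^*_{\Z^l\rtimes\Z/p}(\mathbb{R}^l) \cong K^*_{\Z/p}(T^l)$; and $BH \cong T^{n-l}$ since $H$ is torsion-free abelian. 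Thus the theorem reduces to proving
\[
K^*_{\Z/p}(T^n) \cong K^*(T^{n-l}) \otimes K^*_{\Z/p}(T^l).
\]

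The short exact sequence $0\to H\to \Z^n\to \Z^l\to 0$ splits as abelian groups, since $\Z^l$ is free, although generally not as $\Z/p$-modules. Choosing any such splitting identifies $T^n$ with $T^{n-l}\times T^l$ as a topological space; the projection $\pi\colon T^n\to T^l$ is then $\Z/p$-equivariant, each fiber $T^{n-l}$ carries the trivial $\Z/p$-action (because $\Z/p$ fixes $H$ pointwise), and the principal $T^{n-l}$-bundle $T^n\to T^l$ is trivial in the non-equivariant category. I would then apply the $\Z/p$-equivariant Atiyah--Hirzebruch/Serre spectral sequence for the fibration $T^{n-l}\hookrightarrow T^n \xrightarrow{\pi} T^l$. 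Since the fiber action is trivial, the coefficients $K^*(T^{n-l})$ form a constant system, so $E_2 \cong K^*_{\Z/p}(T^l)\otimes K^*(T^{n-l})$. The Serre differentials vanish because they are controlled by the characteristic classes of the underlying principal $T^{n-l}$-bundle, which are trivial by the previous step; no extension problems arise because $K^*(T^{n-l})\cong\Lambda^*(\Z^{n-l})$ is free as an abelian group. Naturality of the construction with respect to the conjugation action of $G_p$ on $\Z^n\rtimes\Z/p$ then yields the isomorphism as $G_p$-modules.

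The main obstacle is to justify rigorously the collapse of the spectral sequence in the genuinely equivariant setting. In the coordinates $T^n\cong T^{n-l}\times T^l$ the $\Z/p$-action takes the form $g\cdot(w,v)=(w+A(v),Bv)$ for some homomorphism $A\colon T^l\to T^{n-l}$ (with $B$ the induced action on $T^l$), and one cannot simply deform $A$ to $0$ through $\Z/p$-actions, since a non-trivial $A$ is not null-homotopic as a map of tori. Moreover the fixed-point sets vary non-trivially with the twist (for instance when $n=2$, $p=2$ and $\Z/p$ acts by swapping the coordinates, the fixed set is a single circle, whereas for the untwisted product action with $-1$ on the second factor it is two disjoint circles), so the argument cannot be reduced to the product case by an equivariant homotopy equivalence. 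The key observation needed is that the equivariant characteristic classes governing the Serre differentials reduce to the (trivial) class of the underlying non-equivariant principal $T^{n-l}$-bundle, since $\Z/p$ acts trivially on the structure group $T^{n-l}$.
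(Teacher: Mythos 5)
Your reduction to $K^*_{\Z/p}(T^n)$ and the choice of a splitting of $0\to H\to\Z^n\to\Z^l\to 0$ as abelian groups are fine, but the step on which everything rests --- ``each fiber $T^{n-l}$ carries the trivial $\Z/p$-action (because $\Z/p$ fixes $H$ pointwise)'' --- is false, and it is exactly where the argument breaks. In your own coordinates the generator acts by $g\cdot(w,v)=(w+A(v),Bv)$. Over the basepoint $v=0$ the fiber is indeed fixed pointwise, but over any other $B$-fixed point $v\in T^l$ the stabilizer acts on the fiber by translation by $A(v)\in T^{n-l}$, which is an element of order dividing $p$ and is in general nonzero; when it is nonzero the action on that fiber is \emph{free}. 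Consequently the fiberwise coefficient system is not constant: the fiber over $v=0$ contributes $R(\Z/p)\otimes K^*(T^{n-l})$ while the fiber over such a $v$ contributes only $K^*(T^{n-l}/(\Z/p))\cong K^*(T^{n-l})$, which has smaller rank. So the identification $E_2\cong K^*_{\Z/p}(T^l)\otimes K^*(T^{n-l})$ already fails at the level of the $E_1$/$E_2$-page; the problem is not one of differentials or of equivariant characteristic classes of the bundle, and the ``key observation'' you hope for at the end cannot hold.

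Your own example shows the gap is not repairable: for $n=2$, $p=2$ and the swap action one has $H=\Z(1,1)$, $l=1$, and $\Z^2\rtimes\Z/2$ is the wallpaper group $\mathrm{cm}$. A direct Mayer--Vietoris computation (decompose $T^2$ into a tubular neighborhood of the fixed diagonal and its complement, whose quotient is an open M\"obius band), or equivalently the delocalized Chern character $\bigoplus_{[g]}H^*((T^2)^g;\mathbb{C})^{C(g)}$, gives $K^0_{\Z/2}(T^2_{\mathrm{swap}})\cong\Z^2\cong K^1_{\Z/2}(T^2_{\mathrm{swap}})$, whereas the right-hand side $K^*(S^1)\otimes K^*_{\Z/2}(S^1_{-1})$ has rank $3$ in each degree (since $K^*_{\Z/2}(S^1_{-1})=(\Z^3,0)$). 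The two sides genuinely differ, so no collapse argument can close the gap; the twist $A$ that you correctly flagged as the ``main obstacle'' is a real obstruction, not a technicality. For comparison, the paper's own proof takes a parallel route (Segal's spectral sequence for a $\Z/p$-cover of $B\Z^l$ pulled back to $B\Z^n$) and hinges on the assertion $\pi^{-1}(U_\sigma)\simeq_{\Z/p}B((\Z^n)^{\Z/p})\times(\Z/p)/H_\sigma$; this is the same claim as your ``trivial fiber action'' in different clothing, and it fails for the same reason when $A(v)\neq 0$ (the preimage of such an orbit is a connected free $\Z/p$-space, which cannot be $\Z/p$-homotopy equivalent to a space with nonempty fixed points). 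So you should not try to force the spectral sequence to degenerate; rather, the twisting by $A$ must enter the answer, and the statement needs an additional hypothesis (e.g.\ that the extension splits as $\Z/p$-modules) to be correct.
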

\begin{proof}
    First note that as $\Z^n$ and $\Z^l$ are torsion free we have isomorphisms
$$K^*_{\Z^n\rtimes \Z/p}(\underline{E}(\Z^n\rtimes \Z/p))\cong K_{\Z/p}^*(B\Z^n)$$
$$K^*_{\Z^l\rtimes \Z/p}(\underline{E}(\Z^l\rtimes \Z/p))\cong K_{\Z/p}^*(B\Z^l).$$
Now we use the Segal's spectral sequence for $\Z/p$-equivariant $K$-theory associated to a $\Z/p$-cover, see for instance Proposition 5.2 in \cite{Se68}. Given a  $\Z/p$-cover $\mathcal{U}=\{U_i\}_{i\in\Sigma}$ for $BL$ and  a subset $\sigma\subseteq\Sigma$, define $U_\sigma=\cap_{i\in\sigma}U_i$.
From now on we assume that  $\mathcal{U}=\{U_i\}_{i\in\Sigma}$ is a $\Z/p$-cover such that for all $\sigma$, the set $U_\sigma$ is $\Z/p$-homotopy equivalent to an orbit
$(\Z/p)/H_\sigma$ for a subgroup $H_\sigma\subseteq \Z/p$. The existence of $\mathcal{U}$ is  guaranteed by the work in \cite{AE}. Abusing of notation, we denote $\pi\colon B\Z^n\to B\Z^l$ the map induced by $\pi\colon \Z^n\to \Z^l$. We have that $\mathcal{V}=\pi^{-1}(\mathcal{U})$ is a $\Z/p$-covering of $B\Z^n$, and associated to $\mathcal{V}$ we have an spectral sequence converging to $K_{\Z/p}^*(B\Z^n)$ whose $E_1$-term is given by a Bredon cochain complex with local coefficients (Definition 2.7 in \cite{BEUV}) $$E_1^{\alpha,\beta}=C^{\alpha+\beta}(B\Z^n,\mathcal{V};\mathcal{R})=\bigoplus_{\sigma\subseteq\Sigma, |\sigma|=\alpha+1}K^\beta_{\Z/p}(\pi^{-1}(U_\sigma)).$$With a differential defined using the restriction homomorphisms on equivariant $K$-theory. Note that for every $\sigma\subseteq\Sigma$ we have a $\Z/p$-homotopy equivalence $$\pi^{-1}(U_\sigma)\simeq_{\Z/p}B((\Z^n)^{\Z/p})\times\left( (\Z/p)/H_\sigma\right).$$

Since the $\Z/p$-action on $B(\Z^n)^{\Z/p}$ is trivial, we have an isomorphism of cochain complexes
$$C^{\alpha+\beta}(B\Z^n,\mathcal{V};\mathcal{R})\cong K^\beta(B(\Z^n)^{\Z/p})\otimes \bigoplus_{\sigma\subseteq\Sigma, |\sigma|=\alpha+1}K^0_{\Z/p}(U_\sigma).$$
As a consequence the $E^2$-term is isomorphic to
$$E_2^{\alpha,\beta}\cong K^\beta(B(\Z^n)^{\Z/p})\otimes H^\alpha(B\Z^l;\mathcal{R}),$$where $H^\alpha(B\Z^l;\mathcal{R})$ denotes the $\alpha$-th Bredon cohomology group with coefficients in representations. Moreover by construction we have an isomorphism$$E_\infty^{\alpha+\beta}\cong K^\beta(B(\Z^n)^{\Z/p})\otimes F_\infty^{\alpha,0},$$where $F_\infty^{\alpha,0}$ denotes the $\infty$-term in the usual Atiyah-Hirzebruch spectral sequence for $\Z/p$-equivariant $K$-theory of $B\Z^l$. But as the $\Z/p$-action on $\Z^l$ is free outside the origin, by Theorem 7.1 in \cite{DL13}, we know that $F_\infty^{\alpha,0}$ is torsion free then $E_\infty^{\alpha+\beta}$ is torsion free and there are no extension problems. Therefore the spectral sequence collapses and we have the desired isomorphism.
\end{proof}
\begin{corollary}\label{cor:torsionfree}
    $K_*^{\Z^n\rtimes \Z/p}(\underline{E}(\Z^n\rtimes \Z/p))$ is finitely generated and torsion free.
\end{corollary}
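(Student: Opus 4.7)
The plan is to read off the corollary directly from the Künneth-type isomorphism of \Cref{kunneth} by arguing that each of the two tensor factors on the right-hand side is a finitely generated torsion-free abelian group, and then invoking the fact that a tensor product of such groups is again finitely generated and torsion-free.

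For the first factor, since $(\Z^n)^{\Z/p}$ is a subgroup of a free abelian group it is itself free abelian of some rank $k$, and hence $B((\Z^n)^{\Z/p})$ is homotopy equivalent to a $k$-torus $T^k$. The topological $K$-theory of $T^k$ is well known to be the exterior algebra $\Lambda^{*}_{\Z}(\Z^{k})$, which is a finitely generated torsion-free graded abelian group (with total rank $2^k$). No work is required here beyond citing the standard computation.

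For the second factor $K^{*}_{\Z^l\rtimes\Z/p}(\underline{E}(\Z^l\rtimes\Z/p))$, the key observation is that, by construction of the short exact sequence
\[
0\to (\Z^n)^{\Z/p}\to \Z^n\xrightarrow{\pi} \Z^l \to 0,
\]
the induced $\Z/p$-action on $\Z^l$ is free outside the origin. Therefore we are in the situation covered by Davis--Lück, and Theorem~7.1 of \cite{DL13} (already invoked in the proof of \Cref{kunneth}) shows that this equivariant $K$-group is finitely generated and torsion-free. Combining the two factors, and recalling that $\otimes_{\Z}$ of finitely generated torsion-free abelian groups is finitely generated torsion-free, gives the claim for $K^{*}$-cohomology.

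The one minor wrinkle to address is that the statement is phrased in terms of equivariant $K$-\emph{homology} whereas \Cref{kunneth} delivers equivariant $K$-\emph{cohomology}; this is easily resolved by applying the equivariant Spanier--Whitehead duality of \Cref{SW-dual} on a suitable exhaustion of $\underline{E}(\Z^n\rtimes \Z/p)$ by finite $\Z/p$-CW-subcomplexes, since torsion-freeness and finite generation are preserved under the resulting degree shift and colimit. Apart from this bookkeeping the proof is essentially immediate; the real content has been packaged into \Cref{kunneth}.
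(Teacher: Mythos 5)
Your treatment of the two tensor factors is exactly the paper's argument: the first factor is the $K$-theory of a torus, the second is finitely generated and torsion free by \cite[Theorem~7.1]{DL13}, and \Cref{kunneth} then gives the claim for the equivariant $K$-\emph{cohomology} groups. You also correctly identified that the corollary is stated for $K$-homology, so a further step is needed. The gap is in how you carry out that step.

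Your proposed use of \Cref{SW-dual} does not close the argument. First, as stated that theorem requires a \emph{finite} group acting on a finite $G$-CW-complex, whereas $\underline{E}(\Z^n\rtimes\Z/p)$ is a $(\Z^n\rtimes\Z/p)$-CW-complex for an infinite group; an exhaustion by finite $\Z/p$-CW-subcomplexes is not available without first replacing the space by something like $K_*^{\Z/p}(B\Z^n)$, i.e.\ the $\Z/p$-action on the torus $T^n$ (which is how the paper reduces to a finite group in the proof of \Cref{kunneth}). Second, and more seriously, even after such a reduction Spanier--Whitehead duality identifies $K_*^{G}(X)$ with $\widetilde{K}_{G}^{*-1}(S^V-X)$, the reduced $K$-cohomology of the \emph{complement}, not of $X$ itself. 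Torsion-freeness of $K^*_G(X)$ does not transfer to $\widetilde{K}_G^{*-1}(S^V-X)$ by a ``degree shift and colimit''; extracting information about the complement requires genuine additional input (in effect, one would be reproving a universal coefficient theorem). The paper avoids this entirely by invoking the Universal Coefficient Theorem for equivariant $K$-theory from \cite{JoLu}, which directly converts finite generation and torsion-freeness of $K^*$ into the same statement for $K_*$ (the $\mathrm{Ext}$-term vanishes). Replacing your duality paragraph with that citation repairs the proof; as written, the final step does not follow.
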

\begin{proof}
Since $K^*(B((\Z^n)^{\Z/p}))$ is the $K$-theory of a torus, it is torsion free. On the other hand $K^*_{\Z^l\rtimes\Z/p}(\underline{E}(\Z^l\rtimes\Z/p))$ is torsion free by \cite[Theorem 7.1]{DL13}. Thus by \Cref{kunneth} we have that $K^*_{\Z^n\rtimes \Z/p}(\underline{E}(\Z^n\rtimes \Z/p))$ is torsion free. Now the result is a consequence of the Universal Coefficient Theorem for equivariant $K$-theory in \cite{JoLu}.
\end{proof}

\subsection{Torsion freeness of $K_*^{\Z^n\rtimes G}(\underline{E}(\Z^n\rtimes G))$}
Denote by $\Z_{(p)}$ the subring of $\mathbb Q$ of all fractions with denominator prime relative with $p$.
We denote the homology theory
$K_*(-)\otimes \Z_{(p)}$ by $K_*(-)_{(p)}$. Let $\Gamma$  be a group given by an extension
\begin{equation}\label{eq:extensions:LGammaH}
    0\to L\to \Gamma\xrightarrow{\pi} H\to 0,
\end{equation}
where $L$ is finitely generated free abelian and $H$ is a subgroup of $G$. Provided $p$ divides $|H|$, let us denote $\Gamma_p=\pi^{-1}(\Z/p)$.

\begin{lemma}\label{lemma:reducing:to:invariants}
 For all $\beta\in \Z$, we have an isomorphism
\[
K_\beta(B\Gamma)_{(p)}\cong \left(K_\beta(B\Gamma_p)_{(p)}\right)_{G_p} \]
where the coinvariants on the right hand side are taken with respect to the $G_p$-action on $K_\beta(B\Gamma_p)_{(p)}$ induced by conjugation of $G_p$ on $\Gamma_p$.
\end{lemma}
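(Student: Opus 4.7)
The plan is to exploit that $[\Gamma : \Gamma_p] = |H|/p$ is invertible in $\Z_{(p)}$, via a transfer argument for the finite regular cover $B\Gamma_p \to B\Gamma$.

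First I would check that $\Gamma_p$ is normal in $\Gamma$ and identify the deck group. Since $H\leq G$ is abelian, $\Z/p$ is normal in $H$ and its preimage $\Gamma_p=\pi^{-1}(\Z/p)$ is normal in $\Gamma$ with quotient $\Gamma/\Gamma_p\cong H/(\Z/p)\leq G_p$. Because $m$ is square-free, $d:=|H|/p$ is coprime to $p$ and thus invertible in $\Z_{(p)}$; the cover $B\Gamma_p\to B\Gamma$ is regular of degree $d$, and its deck action on $K_\beta(B\Gamma_p)$ is the restriction to $H/(\Z/p)$ of the $G_p$-conjugation action in the statement. Consequently the $G_p$-coinvariants agree with the $H/(\Z/p)$-coinvariants, since elements of $G_p$ acting trivially on $K_\beta(B\Gamma_p)$ contribute no new relations.

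Next I would invoke the transfer $\tau \colon K_\beta(B\Gamma)_{(p)} \to K_\beta(B\Gamma_p)_{(p)}$ associated to the finite cover, together with the map $i_*$ induced by the cover. The two standard identities
\[
i_*\circ\tau \;=\; d\cdot\mathrm{id}, \qquad \tau\circ i_* \;=\; \sum_{q\in H/(\Z/p)} q_*,
\]
combined with the observation that $i_*(q_*x)=i_*(x)$ (as $q$ covers the identity on $B\Gamma$), show that $i_*$ descends to a map $\overline{i_*}$ out of $K_\beta(B\Gamma_p)_{(p),\,H/(\Z/p)}$. After inverting $d$, the first identity gives $\overline{i_*}\circ d^{-1}\tau=\mathrm{id}$, while the second gives $d^{-1}\tau\circ\overline{i_*}=\mathrm{id}$ on coinvariants (the norm collapses to multiplication by $d$ there). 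Hence $\overline{i_*}$ is the claimed isomorphism.

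I expect the only delicate point is verifying that the deck transformation action on $K_\beta(B\Gamma_p)$ matches the conjugation action in the lemma's statement, but this is standard: for a regular cover $BN\to B\Gamma$ the deck group $\Gamma/N$ acts on $BN$ up to homotopy by the outer action of $\Gamma/N$ on $N$. An entirely equivalent route is the Cartan--Leray homology spectral sequence $E^2_{s,t}=H_s(H/(\Z/p);\,K_t(B\Gamma_p))$ for this cover, which after $(p)$-localization collapses onto its $s=0$ row since $|H/(\Z/p)|$ is a unit in $\Z_{(p)}$, again returning the coinvariants.
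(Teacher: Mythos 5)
Your argument is correct, and your closing remark already identifies the paper's actual route: the authors run the Leray--Serre (Cartan--Leray) spectral sequence of the fibration $B\Gamma_p\to B\Gamma\to BG_p$ for the localized theory $K_*(-)_{(p)}$, observe that $E^2_{\alpha,\beta}=H_\alpha(G_p;K_\beta(B\Gamma_p)_{(p)})$ vanishes for $\alpha>0$ because $|G_p|=m/p$ is a unit in $\Z_{(p)}$, and read off the coinvariants from the surviving column. Your primary argument via the covering-space transfer is a genuinely different (if closely related) mechanism resting on the same arithmetic input, namely that the covering degree $d=|H|/p$ is invertible after $(p)$-localization; the identities $i_*\circ\tau=d\cdot\mathrm{id}$ and $\tau\circ i_*=\sum_q q_*$ are applied correctly, and the descent of $i_*$ to coinvariants is handled properly. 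What the transfer buys you is a completely elementary, convergence-free argument: it works skeleton by skeleton and passes to the colimit, whereas the spectral-sequence route has to address convergence for the infinite-dimensional space $B\Gamma_p$ (the paper cites Milnor for this, and remarks that this is precisely why one must use $K$-homology rather than $K$-theory, where a $\lim^1$-term intrudes). What the spectral sequence buys the authors is a statement that slots uniformly into the inductive machinery of their Theorem 3.8. One small point worth making explicit in your write-up: the deck group of the cover is $\Gamma/\Gamma_p\cong H/(\Z/p)=H_p$, which may be a proper subgroup of $G_p$; you note correctly that the two coinvariant groups coincide because the extra elements act through $H_p$, and in fact the paper's own proof silently makes the same identification when it writes the base of the fibration as $BG_p$.
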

\begin{proof}
    The extension induces, after choosing suitable models, a fibration of classifying spaces
\begin{equation}\label{fibration}
    B\Gamma_p\to B\Gamma\to BG_p
\end{equation}
Consider the Leray-Serre spectral sequence associated to this fibration for  $K_*(-)_{(p)}$. By \cite{milnor} this spectral sequence converges to $K_*(B\Gamma)_{(p)}$ with second page:
\begin{equation*}
E^2_{\alpha,\beta}=H_\alpha(G_p;K_\beta(B\Gamma_p)_{(p)})=
    \begin{cases}
\left(K_\beta(B\Gamma_p)_{(p)}\right)_{G_p}&\alpha=0\\
    0&\alpha\neq0.
        \end{cases}
    \end{equation*}
Now the result follows.

\end{proof}

\begin{remark}
It is worth saying that we are working with $K$-homology instead of $K$-theory theories because the Leray-Serre spectral sequence of the fibration (\ref{fibration}) converges to the $K$-homology of the total space, whilst for $K$-theory a $\lim^1$-term shows up.

\end{remark}
Note that \cref{lemma:reducing:to:invariants} gives us a way to get a computation of $K_*(B\Gamma)$ from a computation of $K_*(B\Gamma_p)$ for every $p$ dividing $m$, compare with \cite[Lemma 5.2]{SSV24}.

Before proving the main result of this subsection, we need to state some notation and an auxiliary theorem. From now on, for convenience, assume that $m=p_1\cdots p_\ell$ with $p_1<\cdots< p_\ell$. Recall $G$ is denoting a finite cyclic group of order $m$.  Let us filter the family $\fin$ as follows
\begin{equation}\label{filtration}
   \mathcal{F}_0=\{1\}\subseteq \mathcal{F}_1\subseteq\cdots\subseteq\mathcal{F}_\ell=\fin 
\end{equation}
 where $\calF_i$ is the family of all subgroups of $\Gamma$ whose order divide $p_1\cdots p_i$.

Note that every element, if any, in $\calF_{i+1}-\calF_i$ is maximal, hence we have everything set up to apply \cref{LW} to $\calF_{i}\subseteq\calF_{i+1}$.
\begin{definition}
   Let $\Gamma$ be as above, let $p$ be a prime number that divides $|H|$, and let $\calF$ be a family of subgroups of $\Gamma$. We say that $\Gamma$ satifies the ($p$, $\calF$)-condition if  the induction map $\Ind_{\Gamma_p}^\Gamma( E_{\calF\cap\Gamma_p}\Gamma_p)\to E_\calF \Gamma$ leads to an isomorphism
\begin{equation}\label{coinvariant}
    \left(K_*^{\Gamma_p}(E_{\calF\cap\Gamma_p}\Gamma_p)_{(p)}\right)_{H_p}\xrightarrow{\Ind_{\Gamma_p}^{\Gamma}} K_*^{\Gamma}(E_\calF\Gamma)_{(p)},
    \end{equation}where $\Gamma_p=\pi^{-1}(\Z/p)$ and $H_p=H/(\Z/p)$. 
\end{definition}

Note that the map (\ref{coinvariant}) is well defined, that is the map induced by $\Ind_{\Gamma_p}^\Gamma( E_{\calF\cap\Gamma_p}\Gamma_p)\to E_\calF \Gamma$ factors through coinvariants, since any generator of $H_p$ acts trivially on the group $K_*^{\Gamma}(\Ind_{\Gamma_p}^\Gamma( E_{\calF\cap\Gamma_p}\Gamma_p))\cong K_*^{\Gamma_p}(E_{\calF\cap\Gamma_p}\Gamma_p)$, for details see Section 1 in \cite{Luck2002}.

\begin{theorem}\label{induction} Let $p$ be a prime that divides m. Then $\Gamma$ satisfies the ($p$,$\calF_i$)-condition for every   $i\in\{0,\ldots,\ell\}$.
\end{theorem}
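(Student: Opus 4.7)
The plan is to proceed by induction on $i \in \{0, 1, \ldots, \ell\}$. The base case $i=0$ is essentially \Cref{lemma:reducing:to:invariants}: since $\calF_0 = \{1\}$, the spaces $E_{\calF_0}\Gamma$ and $E_{\calF_0 \cap \Gamma_p}\Gamma_p$ are models for $E\Gamma$ and $E\Gamma_p$ respectively, so, after passing to Borel constructions, the induction map in \eqref{coinvariant} becomes the comparison map $(K_*(B\Gamma_p)_{(p)})_{H_p} \to K_*(B\Gamma)_{(p)}$ whose invertibility is exactly the content of \Cref{lemma:reducing:to:invariants}.

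For the inductive step, assume the $(p,\calF_i)$-condition holds for any group fitting into an extension of the form \eqref{eq:extensions:LGammaH}, and apply \Cref{LW} to the pair $\calF_i \subseteq \calF_{i+1}$. Any $M \in \calF_{i+1} - \calF_i$ has order exactly $p_1 \cdots p_{i+1}$; since $m$ is square-free, two distinct subgroups of this common order cannot be nested, so $M$ is automatically maximal in $\calF_{i+1} - \calF_i$. Letting $\calM_{i+1}$ be a set of representatives of $\Gamma$-conjugacy classes of such maximal subgroups, \Cref{LW} realizes $E_{\calF_{i+1}}\Gamma$ as the homotopy pushout of $E_{\calF_i}\Gamma$ with $\bigsqcup_{M\in\calM_{i+1}} \Gamma\times_{N_\Gamma M} E_{\mathcal{SUB}(M)\cup(\calF_i\cap N_\Gamma M)}(N_\Gamma M)$ along $\bigsqcup_{M\in\calM_{i+1}} \Gamma\times_{N_\Gamma M} E_{\calF_i\cap N_\Gamma M}(N_\Gamma M)$. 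A parallel pushout describes $E_{\calF_{i+1}\cap\Gamma_p}\Gamma_p$, with its own set of representatives $\calM_{i+1}^p$.

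After localizing at $p$ and applying equivariant $K$-homology, one obtains two long exact Mayer--Vietoris sequences connected by the induction map. Taking $H_p$-coinvariants on the $\Gamma_p$-side is exact since $|H_p|$ divides $m/p$ and is therefore coprime to $p$, so one obtains a ladder of long exact sequences. The inductive hypothesis, applied to $\Gamma$ itself, handles the $E_{\calF_i}\Gamma$-entry. For the two pieces indexed by $M\in\calM_{i+1}$, the induction--restriction adjunction gives $K_*^\Gamma(\Gamma\times_{N_\Gamma M}Y) \cong K_*^{N_\Gamma M}(Y)$, and $N_\Gamma M$ itself fits into an extension as in \eqref{eq:extensions:LGammaH} with free-abelian kernel $L\cap N_\Gamma M$ and quotient a subgroup of $H$. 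The terms built from $E_{\calF_i\cap N_\Gamma M}(N_\Gamma M)$ are thus covered by the inductive hypothesis applied to $N_\Gamma M$. For the terms involving $\mathcal{SUB}(M)\cup(\calF_i\cap N_\Gamma M)$, I would apply \Cref{lemma:union:families} to split them into three easier pieces; two of these fall under the inductive hypothesis again, while the remaining piece $E_{\mathcal{SUB}(M)}(N_\Gamma M)$ is analyzed by means of \Cref{decompintro-hom} applied to the short exact sequence $1 \to M \to N_\Gamma M \to N_\Gamma M/M \to 1$ to decompose its equivariant $K$-homology as a sum indexed by characters of $M$. Once each corner of the ladder is known to be an isomorphism, the five lemma yields the $(p,\calF_{i+1})$-condition.

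The main obstacle is the combinatorial bookkeeping that matches $\calM_{i+1}$ with the $H_p$-orbits in $\calM_{i+1}^p$ and the normalizer $N_{\Gamma_p}(M')$ with $N_\Gamma(M)$ in a way that makes the induction map a termwise isomorphism. This requires a careful analysis of how conjugation by a lift of a generator of $H_p$ acts on conjugacy classes of maximal subgroups, separating the two cases according to whether $p$ divides the order of $M$ or not; in the latter case each $\Gamma$-class in $\calM_{i+1}$ splits into a free $H_p$-orbit of $\Gamma_p$-classes, while in the former case the classes correspond bijectively. Once this matching is set up compatibly with the normalizers, naturality of induction reduces everything to the inductive hypothesis applied to the proper subgroups $N_\Gamma M \subseteq \Gamma$, closing the induction.
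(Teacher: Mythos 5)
Your proposal follows the paper's proof essentially step for step: induction on $i$ with \Cref{lemma:reducing:to:invariants} as the base case, the L\"uck--Weiermann pushout for $\calF_i\subseteq\calF_{i+1}$ compared with its restriction to $\Gamma_p$ via the five lemma on the Mayer--Vietoris ladder, \Cref{lemma:union:families} to break up the union family, and \Cref{decompintro-hom} for the $\mathcal{SUB}(M)$ corner; note that the paper simply restricts the single $\Gamma$-pushout to $\Gamma_p$, which sidesteps the conjugacy-class bookkeeping you flag as the main obstacle. Two points where your execution would need repair: the corner $E_{\mathcal{SUB}(M)\cap(\calF_i\cap N_\Gamma M)}(N_\Gamma M)$ is \emph{not} covered by the inductive hypothesis, since that family contains only subgroups of $M$ rather than all subgroups of $N_\Gamma M$ of order dividing $p_1\cdots p_i$; it must be treated exactly like $E_{\mathcal{SUB}(M)}(N_\Gamma M)$, using that it has the maximum element $M_{i+1}$ so that $E(N_\Gamma M/M_{i+1})$ is a model. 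Moreover, \Cref{decompintro-hom} is stated only for finite groups, so one cannot apply it directly to $1\to M\to N_\Gamma M\to N_\Gamma M/M\to 1$; one first divides out the free action of the free-abelian kernel $L^M$ on $EW_\Gamma M$ to get $K_*^{\bar M}(EW_\Gamma M/L^M)$ with $\bar M\leq G$ finite, and the resulting summands $K_*(B(W_\Gamma M)_{[\rho]})$ are then handled by \Cref{lemma:reducing:to:invariants} (the base-case mechanism), not by the inductive hypothesis.
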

\begin{proof}
     The proof is by induction on $i$. For $i=0$, it is just the isomorphism given by \cref{lemma:reducing:to:invariants}. The induction hypothesis consists on assuming that all possible extensions of the form \eqref{eq:extensions:LGammaH} satisfy the $(p,\calF_i)$-condition.
    For the inductive step,  apply \Cref{LW} for $\Gamma$ and the families $\calF_i\subseteq \calF_{i+1}$ and consider also the $\Gamma_p$-pushout given by restricing the action from $\Gamma$ to $\Gamma_p$. The induction functor $\Ind_{\Gamma_p}^\Gamma$ gives us a morphism on the associated Mayer-Vietoris
    exact sequences. The idea is to use the five lemma to obtain the desired isomorphism. Let $M$ be a maximal element in $\calF_i\subseteq \calF_{i+1}$, then we have that $N_\Gamma M$ fits in an extension
    \begin{equation}\label{normalizer-ext}0\to L^M\to N_\Gamma M\to \bar M\to0,\end{equation}
    where $\bar M$ is a subgroup of $G$.
    Then by induction hypothesis, condition $(p,\calF_i\cap N_\Gamma M)$ provides the following isomorphism
$$\left(K_*^{\Gamma_p}(E_{\calF_i\cap N_\Gamma M\cap\Gamma_p }((N_\Gamma M)\cap\Gamma_p)_{(p)}\right)_{\bar M_p}\xrightarrow{\Ind_{\Gamma_p}^{\Gamma}} K_*^{\Gamma}(E_{\calF_i \cap N_\Gamma M}(N_\Gamma M))_{(p)}$$
 On the other hand we have a map
\begin{equation*}
 \left(K_*^{\Gamma_p}(E_{\mathcal{SUB}(M)\cup(\calF_i\cap N_\Gamma M\cap\Gamma_p )}((N_\Gamma M)\cap\Gamma_p)_{(p)}\right)_{\bar M_p}\xrightarrow{\Ind_{\Gamma_p}^{\Gamma}} K_*^{\Gamma}(E_{\mathcal{SUB}(M)\cup(\calF_i \cap N_\Gamma M)}(N_\Gamma M))_{(p)}   
\end{equation*}
we want to show that this map is an isomorphism. By \cref{lemma:union:families}, we have a $\Gamma$-pushout
\begin{equation}\label{union}
\xymatrix{E_{\mathcal{SUB}(M)\cap(\calF_i\cap N_\Gamma M)} (N_\Gamma M)\ar[r]^-i\ar[d]^{\lambda}&E_{\calF_i\cap N_\Gamma M}(N_\Gamma M)\ar[d]\\
E_{\mathcal{SUB}(M)} (N_\Gamma M)\ar[r]&E_{\mathcal{SUB}(M)\cup(\calF_i \cap N_\Gamma M)
}(N_\Gamma M)}\end{equation}
In both cases $\mathcal{SUB}(M)$ and $\mathcal{SUB}(M)\cap\calF_i\cap N_\Gamma M$ have a maximum element (here we are using the ordering  of the $p_j$'s), $M$ and $M_{i+1}=M/(\Z/p_{i+1})$ respectively, then $E(W_\Gamma M)$ with the action induced by the quotient map is a model for $E_{\mathcal{SUB}(M)}N_\Gamma M$ and $E(N_\Gamma M/(M_{i+1}))$ with the action induced by the quotient map is a model for $E_{\mathcal{SUB}(M)\cap\calF_i\cap N_\Gamma M}N_\Gamma M$. 

%\textcolor{red}{Aquí hay una pequeña imprecisión porque $M\subseteq N_\Gamma M$ no es central, luego la descomposición no es tan sencilla porque los grupos de isotropía pueden variar, sin embargo se puede usar la hipótesis de inducción para cada grupo de isotropía, además hay que usar la descomposición en K-homología. Para esto hay que reducir el cálculo de K-homología a grupos finitos partiendo el normalizador por $L^M$}

Now as $L^M$ embeds into $W_\Gamma M$, then $L^M$ acts freely on $EW_\Gamma M$ and we have an isomorphism
$$K_*^{N_\Gamma M}(EW_\Gamma M)\cong K_*^{\bar{M}}(EW_\Gamma M/L^M).$$
On the other hand, $M$ maps injectively into $\bar M$ which acts trivially on $EW_\Gamma M/L^M$, then by Theorem \ref{decompintro-hom} we have an isomorphism
\begin{align*}  
K^{N_\Gamma M}_*(EW_\Gamma M)&\cong K_*^{\bar{M}}(EW_\Gamma M/L^M)\\&\cong\bigoplus_{[\rho]\in \Irr(M)}K_*^{(\bar M/M)_{[\rho]}}(EW_\Gamma M/L^M).\end{align*}
Note that, via the quotient map $W_\Gamma M\to \bar M/M$, we can define a conjugacy action of $W_\Gamma M$ on $\Irr(M)$ and given $[\rho]\in\Irr(M)$, $L^M$ is a normal subgroup of $(W_\Gamma M)_{[\rho]}$. Then we have
$$\bigoplus_{[\rho]\in \Irr(M)}K_*^{(\bar M/M)_{[\rho]}}(EW_\Gamma M/L^M)\cong \bigoplus_{[\rho]\in \Irr(M)}K_*^{(W_\Gamma M)_{[\rho]}}(EW_\Gamma M)\cong \bigoplus_{[\rho]\in \Irr(M)}K_*(B(W_\Gamma M)_{[\rho]}).$$

Then combining the above isomophisms we have

$$K_*^{N_\Gamma M}(EW_\Gamma M)\cong \bigoplus_{[\rho]\in \Irr(M)}K_*(B(W_\Gamma M)_{[\rho]})$$

%Where the last isomorphism it is a consequence of $EW_\Gamma M$ is a model for $EW_\Gamma M_{[\rho]}$ and from  the extension
%$$0\to L^M\to (W_\Gamma M)_{[\rho]}\to (G/M)_{[\rho]}\to0.$$
In a similar way we have
\begin{align*}
K^{N_\Gamma M}_*(E(N_\Gamma M/(M_{i+1})))\cong \bigoplus_{[\rho]\in \Irr(M_{i+1})} K_*(B(N_\Gamma M/M_{i+1})_{[\rho]}).
\end{align*}
%Now by the Universal Coefficient Theorem for equivariant K-theory (Thm. 0.3 in \cite{JoLu}) we obtain natural isomorphisms compatible with induction \textcolor{red}{checar esta afirmación}
%\begin{align*}  
%K^{N_\Gamma M}_*(E(W_\Gamma M))&\cong \left(K^{W_\Gamma M}_*(E(W_\Gamma M))\right)^{|M|}
%\\K^{N_\Gamma M}_*(E(N_\Gamma M/(M/\Z/p_{i+1})))&\cong \left(K^{N_\Gamma M/(M/\Z/p_{i+1})}_*(E(N_\Gamma M/(M/\Z/p_{i+1})))\right)^{|M|/p_{i+1}}.
%\end{align*}
Now \cref{lemma:reducing:to:invariants} applied to the $p$-localization of the right hand side of the above isomorphisms proves that $N_\Gamma M$ satisfies conditions
 $(p,\mathcal{SUB}(M))$ and $(p,\mathcal{SUB}(M)\cap\calF_i\cap N_\Gamma M)$. By the five lemma applied to the morphism of Mayer-Vietoris sequences given by restricting the $\Gamma$-pushout (\ref{union}) to $\Gamma_p$ we get that $N_\Gamma M$ satisfies the $(p,\mathcal{SUB}(M)\cup(\calF_i\cap N_\Gamma M))$-condition.

Again, the five lemma applied to the morphism of Mayer-Vietoris sequences given by restricting the $\Gamma$-pushout (\ref{pushout}) to $\Gamma_p$ gives us the desired result. 
\end{proof}

\begin{corollary}\label{coro:induction}
If $\left(K_*^{\Gamma_p}(\underline{E}\Gamma_p)_{(p)}\right)_{G_p}$ is torsion free for every prime $p$ that divides $m$, then 
    $K_*^\Gamma(\underline{E}\Gamma)$ is torsion free. 
\end{corollary}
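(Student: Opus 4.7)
The plan is to apply Theorem \ref{induction} at the top of the filtration and then upgrade $p$-local torsion-freeness at the primes dividing $m$ to global torsion-freeness. Specializing Theorem \ref{induction} to $i=\ell$, where $\calF_\ell=\fin$ and hence $E_{\calF_\ell}\Gamma=\underline{E}\Gamma$, and $\calF_\ell\cap\Gamma_p$ is the family of finite subgroups of $\Gamma_p$, the $(p,\calF_\ell)$-condition immediately reads
\begin{equation*}
\left(K_*^{\Gamma_p}(\underline{E}\Gamma_p)_{(p)}\right)_{G_p}\xrightarrow{\cong}K_*^\Gamma(\underline{E}\Gamma)_{(p)}
\end{equation*}
for every prime $p$ dividing $m$ (with the convention that the condition is vacuous for those $p$ that do not divide $|H|$). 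Under the hypothesis of the corollary, each left-hand side is torsion-free, so $K_*^\Gamma(\underline{E}\Gamma)_{(p)}$ is torsion-free at every prime $p$ dividing $m$.

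It then remains to rule out torsion at primes $q$ coprime to $m$. The argument I would use invokes the Baum-Connes conjecture, which is known for the virtually abelian group $\Gamma=L\rtimes H$, and identifies $K_*^\Gamma(\underline{E}\Gamma)\cong K_*^H(\widehat L)$, the $H$-equivariant $K$-theory of the Pontryagin dual torus of $L$. Since $|H|$ divides $m$, at any prime $q$ coprime to $m$ the order of $H$ is a unit in $\Z_{(q)}$, and a standard decomposition of $R(H)\otimes\Z_{(q)}$ into a product of fields splits $K_*^H(\widehat L)_{(q)}$ as a direct sum of contributions indexed by conjugacy classes of cyclic subgroups $C\leq H$, each built from the $K$-theory of a fixed subtorus $\widehat L^{C}$, which is torsion-free. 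Alternatively, one can appeal to the transfer associated to the index-$|H|$ inclusion $L\hookrightarrow\Gamma$: after inverting $|H|$, restriction embeds $K_*^\Gamma(\underline{E}\Gamma)_{(q)}$ into $K_*(BL)_{(q)}$, which is torsion-free.

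Combining the two steps, $K_*^\Gamma(\underline{E}\Gamma)_{(p)}$ is torsion-free at every prime $p$, and since $K_*^\Gamma(\underline{E}\Gamma)$ is finitely generated (the Atiyah--Hirzebruch spectral sequence converges from a cocompact Bredon complex), this forces the group itself to be torsion-free. The main obstacle is the treatment of primes $q\nmid m$: Theorem \ref{induction} provides information only at the $p$ dividing $m$, so isolating the torsion to these primes requires external input, either via Baum-Connes plus the $R(H)$-module decomposition of equivariant $K$-theory of the torus, or via a transfer argument. Once this is in place, the corollary follows by simply matching local torsion-freeness at all primes.
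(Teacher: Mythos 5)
Your proposal is correct and follows essentially the route the paper intends: the paper leaves this corollary without an explicit proof, but (as its proof of \cref{thm:main:torsionfree} shows) the intended argument is exactly your first step, namely applying the $(p,\calF_\ell)$-condition of \cref{induction} with $\calF_\ell=\fin$ to identify $K_*^\Gamma(\underline{E}\Gamma)_{(p)}$ with the coinvariants for each $p\mid m$. Your second step, ruling out torsion at primes $q\nmid m$ via the transfer for the finite-index torsion-free subgroup $L$ (or the $R(H)\otimes\Z_{(q)}$ decomposition), correctly supplies a detail the paper leaves implicit in the phrase ``now the result follows.''
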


Now we can state the main result of this section.
\begin{theorem}\label{thm:main:torsionfree} The group $K_*(C_r^*(\Z^n\rtimes G))$ is finitely generated and torsion free.
\end{theorem}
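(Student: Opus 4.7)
The plan is to combine the Baum--Connes conjecture with the work carried out earlier in this section. Since $\Z^n \rtimes G$ is virtually abelian, hence amenable, the Baum--Connes conjecture holds for it by Higson--Kasparov, and so
\[
K_*(C_r^*(\Z^n\rtimes G)) \;\cong\; K_*^{\Z^n\rtimes G}(\underline{E}(\Z^n\rtimes G)).
\]
It therefore suffices to show that the right hand side is finitely generated and torsion free. Finite generation is essentially automatic: the standard linear action of $\Z^n \rtimes G$ on $\dbR^n$ is proper, smooth and cocompact, so $\dbR^n$ is a cocompact model for $\underline{E}(\Z^n\rtimes G)$. The equivariant Atiyah--Hirzebruch spectral sequence then has a finitely generated $E^2$-page (the coefficients are representation rings of finite subgroups), and its target $K_*^{\Z^n\rtimes G}(\underline{E}(\Z^n\rtimes G))$ is consequently finitely generated.

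For torsion freeness the key tool is \Cref{coro:induction}, applied with $\Gamma = \Z^n \rtimes G$ and $\Gamma_p = \Z^n \rtimes \Z/p$. This reduces the task to checking, for every prime $p$ dividing $m$, that the coinvariant module
\[
\bigl(K_*^{\Z^n\rtimes \Z/p}\bigl(\underline{E}(\Z^n\rtimes \Z/p)\bigr)_{(p)}\bigr)_{G_p}
\]
is torsion free as a $\Z_{(p)}$-module, where $G_p = G/(\Z/p)$ acts via the conjugation action coming from the extension $0\to \Z^n\rtimes\Z/p\to \Z^n\rtimes G\to G_p\to 0$.

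By \Cref{cor:torsionfree}, $K_*^{\Z^n\rtimes\Z/p}(\underline{E}(\Z^n\rtimes \Z/p))$ is finitely generated and torsion free, so its $p$-localisation $M$ is a finitely generated free $\Z_{(p)}$-module carrying a natural $\Z_{(p)}[G_p]$-action. Because $m$ is square free, $|G_p| = m/p$ is coprime to $p$ and hence a unit in $\Z_{(p)}$. The averaging element $e = \tfrac{1}{|G_p|}\sum_{g\in G_p} g$ is then a well-defined idempotent in $\Z_{(p)}[G_p]$ that splits the canonical surjection $M \twoheadrightarrow M_{G_p}$, realising the coinvariants as a direct summand of $M$. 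A direct summand of a free $\Z_{(p)}$-module is itself free, so $M_{G_p}$ is torsion free over $\Z_{(p)}$ as required.

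Putting the pieces together, \Cref{coro:induction} gives that $K_*^{\Z^n\rtimes G}(\underline{E}(\Z^n\rtimes G))$ is torsion free, and combined with its finite generation and the Baum--Connes identification the theorem follows. I do not foresee a serious obstacle in this last assembly step: all of the technical content has been concentrated in \Cref{kunneth} and \Cref{induction}, and the concluding argument uses only that $|G_p|$ is invertible in $\Z_{(p)}$, which is precisely the reason for setting up the preceding results with $p$-localised coefficients.
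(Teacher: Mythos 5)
Your proof is correct and follows essentially the same route as the paper: Baum--Connes plus \Cref{induction}/\Cref{coro:induction} to reduce to the $G_p$-coinvariants of $K_*^{\Z^n\rtimes\Z/p}(\underline{E}(\Z^n\rtimes\Z/p))_{(p)}$, whose torsion freeness comes from \Cref{cor:torsionfree} together with the invertibility of $|G_p|=m/p$ in $\Z_{(p)}$. The only cosmetic differences are that you make the finite-generation step explicit via a cocompact model and argue the coinvariant step with the averaging idempotent, where the paper instead cites the identification of coinvariants with invariants (Remark A.2 of \cite{DL13}) --- these are the same underlying fact.
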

\begin{proof} Let $p$ be a prime that divides $m$.
     As the Baum-Connes conjecture is true for $\Z^n\rtimes G$, by  Theorem  \ref{induction} we have an isomorphism
    $$\left(K_*(C_r^*(\Z^n\rtimes \Z/p))_{(p)}\right)_{G_p}\longrightarrow K_*(C_r^*(\Z^n\rtimes G))_{(p)}.$$
   By Remark A.2 in \cite{DL13}
   $$\left(K_*(C_r^*(\Z^n\rtimes \Z/p))_{(p)}\right)_{G_p}\cong \left(K_*(C_r^*(\Z^n\rtimes \Z/p))_{(p)}\right)^{G_p}.$$But by the above \Cref{cor:torsionfree} and the Baum-Connes conjecture, $K_*(C_r^*(\Z^n\rtimes \Z/p))_{(p)}$ is torsion free and taking invariants we get a torsion free group also. As a conclusion $K_*(C_r^*(\Z^n\rtimes G))$ cannot contain $p$-torsion for every $p\mid m$. Now the result follows.
\end{proof}

\section{Formula for the rank}

In light of \Cref{thm:main:torsionfree}, in order to obtain a full computation of $K_*(C_r^*(\Z^n\rtimes G))$ we only have to compute its rank as finitely generated abelian group. The main goal of this section is to provide such a computation by means of a very explicit formula.

\begin{theorem}\label{NM} Let $p$ be a prime dividing $m$.   Let $\Z^l$ be a $\Z/m$-module, such that the action restricted to $\Z/p$ is free outside the origin. 
\begin{enumerate}
    \item There is an isomorphism $K_1(B\Z^l)^{\Z/p}\to K_1(C_r^*(\Z^l\rtimes\Z/p))$ of $\Z/m$-modules. 

    \item There is an exact sequence of $\Z/m$-modules
$$0\to \bigoplus_{(P)\in\mathcal{P}}\widetilde{R}(P)\to K_0(C_r^*(\Z^l\rtimes\Z/p))\to K_0(\underline{B}(\Z^l\rtimes\Z/p))\to0$$
where $\widetilde{R}(P)$ is the kernel of the map $R(P)\to R(1)$ which sends $[V]$ to $[\mathbb{C}^{\dim(V)}]$.
\end{enumerate}
\end{theorem}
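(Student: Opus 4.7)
The plan is to apply the Lück--Weiermann pushout (\Cref{LW}) to $\Gamma=\Z^l\rtimes\Z/p$ with families $\{1\}\subseteq\fin$ and then to compare the resulting equivariant and non-equivariant Mayer--Vietoris long exact sequences. Since the $\Z/p$-action on $\Z^l$ is free outside the origin, a short computation using $(\Z^l)^{\Z/p}=0$ shows that every nontrivial finite subgroup of $\Gamma$ has order $p$ and is self-normalizing. Letting $\mathcal{P}$ denote a complete set of $\Gamma$-conjugacy class representatives of these maximal finite subgroups, the pushout has top row $\bigsqcup_{P\in\mathcal{P}}\Gamma\times_P EP\to E\Gamma$ and bottom row $\bigsqcup_{P\in\mathcal{P}}\Gamma/P\to\underline{E}\Gamma$.

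Applying equivariant $K$-homology together with Baum--Connes (valid for the amenable group $\Gamma$) produces the Mayer--Vietoris sequence
\[
(\mathrm{A})\colon\ \cdots\to\bigoplus_{\mathcal{P}}K_n(BP)\to K_n(B\Gamma)\oplus\bigoplus_{\mathcal{P}}K^P_n(pt)\to K_n(C_r^*\Gamma)\to\cdots,
\]
where $K^P_n(pt)=R(P)$ for $n$ even and $0$ for $n$ odd. Taking the $\Gamma$-quotient of the pushout and applying non-equivariant $K$-homology yields a parallel sequence
\[
(\mathrm{B})\colon\ \cdots\to\bigoplus_{\mathcal{P}}K_n(BP)\to K_n(B\Gamma)\oplus\bigoplus_{\mathcal{P}}\Z\to K_n(\underline{B}\Gamma)\to\cdots.
\]
The natural comparison from equivariant to non-equivariant $K$-homology induces a morphism $(\mathrm{A})\to(\mathrm{B})$ which on the middle term is the identity on $K_n(B\Gamma)$ and the augmentation $R(P)\twoheadrightarrow\Z$ on each $R(P)$-factor, so its middle kernel is precisely $\bigoplus_{\mathcal{P}}\widetilde{R}(P)$.

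A direct diagram chase through this morphism of long exact sequences -- using that the leftmost map in each sequence is injective, because its projection onto the $R(P)$- or $\Z$-factors is the diagonal inclusion -- produces in even degree the short exact sequence
\[
0\to\bigoplus_{P\in\mathcal{P}}\widetilde{R}(P)\to K_0(C_r^*\Gamma)\to K_0(\underline{B}\Gamma)\to 0,
\]
which is part (2), and in odd degree an isomorphism $K_1(C_r^*\Gamma)\xrightarrow{\cong}K_1(\underline{B}\Gamma)$ (since the analogous kernel $K^P_1(pt)=0$). To complete part (1), I would identify $K_1(\underline{B}\Gamma)=K_1(T^l/(\Z/p))$ with $K_1(B\Z^l)^{\Z/p}$ by running the $K$-homology long exact sequence of the pair $(\underline{B}\Gamma,F)$, where $F$ is the finite singular locus (in bijection with $\mathcal{P}$); excision identifies the relative term with the $K$-homology of the free quotient $(T^l\setminus F')/(\Z/p)$, which is in turn computed via the $\Z/p$-equivariant long exact sequence of $(T^l,F')$ and the freeness of $K_*(T^l)$.

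All constructions above are natural with respect to the conjugation action of $G_p=G/(\Z/p)$ on $\Gamma$, so the isomorphism of part (1) and the short exact sequence of part (2) automatically upgrade to isomorphisms/sequences of $\Z/m$-modules. The main obstacle is the integral identification $K_1(\underline{B}\Gamma)\cong K_1(B\Z^l)^{\Z/p}$: rationally it follows from a standard transfer argument, but integrally one must carefully analyze the pair $(\underline{B}\Gamma,F)$ to show the relevant comparison is an isomorphism despite the non-manifold structure of the orbifold quotient $T^l/(\Z/p)$.
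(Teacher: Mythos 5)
Your proposal sets out to reprove \cite[Theorem 8.1]{DL13} from scratch, whereas the paper's own proof simply cites that theorem for the underlying statements about abelian groups and only verifies the genuinely new content, namely that the two maps are $\Z/m$-equivariant because they are induced by $\Z/m$-equivariant maps of spaces ($E\Z^l\to\underline{E}(\Z^l\rtimes\Z/p)$ and $\underline{E}(\Z^l\rtimes\Z/p)\to\underline{B}(\Z^l\rtimes\Z/p)$). Your closing naturality paragraph captures that part correctly. Your Mayer--Vietoris skeleton is essentially the one underlying Davis--L\"uck's original argument, and for part (2) the chase can in fact be pushed through; but note that the point is not injectivity of the left-hand map as you justify it (in odd degree the $R(P)$- and $\Z$-factors vanish, so ``projection onto those factors is the diagonal inclusion'' is false there). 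What the degree-zero chase actually needs is that the component $K_0(BP)\cong\Z\to R(P)$ sends the generator of $K_0(BP)$ (the class of a free orbit) to the regular representation, whose augmentation is $p\neq 1$; this is what forces the image of the left-hand map to meet $\bigoplus_{(P)}\widetilde{R}(P)$ trivially and hence makes $\bigoplus_{(P)}\widetilde{R}(P)\to K_0(C_r^*(\Z^l\rtimes\Z/p))$ injective, closing the short exact sequence.

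The genuine gap is in part (1). Your argument only yields $K_1(C_r^*(\Z^l\rtimes\Z/p))\cong K_1(\underline{B}(\Z^l\rtimes\Z/p))$, and you still need the \emph{integral} identification of $K_1(\underline{B}(\Z^l\rtimes\Z/p))=K_1(T^l/(\Z/p))$ with $K_1(B\Z^l)^{\Z/p}$, compatibly with the specific map in the statement (the one induced by $E\Z^l\to\underline{E}(\Z^l\rtimes\Z/p)$). You explicitly flag this as ``the main obstacle'' and do not carry it out, and it is not a routine excision exercise: in \cite{DL13} this step rests on the computation of $H_*(\Z^l\rtimes\Z/p)$ and the resulting torsion-freeness and splitting statements for $K_*(B(\Z^l\rtimes\Z/p))$, which is precisely where the hypothesis that the $\Z/p$-action is free outside the origin does its work. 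As written, part (1) is therefore not proved. Since the result at the level of abelian groups is verbatim \cite[Theorem 8.1]{DL13}, the efficient route is the paper's: cite it and verify $\Z/m$-equivariance of the maps, which your final paragraph already accomplishes.
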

\begin{proof}
    Both statements are proved in  \cite[Theorem 8.1]{DL13} when $\Z^l$ is endowed only with the structure of a $\Z/p$-module. We only need to argue that the homomorphisms are also $\Z/m$-equivariant. 

    For (1): $K_1(B\Z^l)^{\Z/p}\to K_1(C_r^*(\Z^l\rtimes\Z/p))$ the map is induced by the (natural) map $E\Z^l \to \underline E (\Z^l\rtimes\Z/p)$, which is $\Z/m$-equivariant, hence the homomorphism is also $\Z/m$-equivariant.

    For (2): the map $K_0(C_r^*(\Z^l\rtimes\Z/p))\to K_0(\underline{B}(\Z^l\rtimes\Z/p))$ is induced, via the Baum-Connes isomorphism, by the projection $\underline E (\Z^l\rtimes\Z/p) \to \underline B (\Z^l\rtimes\Z/p)$ which is $\Z/m$-equivariant, hence the aforementioned homomorphism is also $\Z/m$-equivariant.
\end{proof}
Next, we compute the rank of $K_*(C_r^*(\Z^n\rtimes G))$. First we need the following lemma.

\begin{lemma}\label{rank} Let $e_s$ be the $s$-th elementary symmetric polynomial in $n$-variables. Let $H$ be a finite cyclic group acting on $\Z^n$,  let $h$ be a fixed generator of $H$, and let $t\geq 1$ denote by $\bar y_H^t$ the $n$-tuple of eigenvalues (including multiplicities) of $h^t\otimes \mathrm{Id}:\Z^n\otimes\mathbb C\to \Z^n\otimes\mathbb C$.
    For each $s\geq 0$, we have
    \[\rank(H_s(\Z^n)_{H}))= \frac{1}{|H|}\sum_{t=1}^{|H|}e_s(\bar{y}_H^t). \]
\end{lemma}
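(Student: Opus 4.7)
The plan is to express $H_s(\Z^n)$ as an exterior power and then apply Molien-type trace formulas. Concretely, since $\Z^n$ is a finitely generated free abelian group, we have the standard identification $H_s(\Z^n;\Z)\cong \Lambda^s \Z^n$, and this identification is natural in automorphisms of $\Z^n$, so it is $H$-equivariant. Therefore the task reduces to computing $\rank\bigl((\Lambda^s \Z^n)_H\bigr)$.

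Next, I would pass to rational coefficients. For any finite group $H$ acting on a finitely generated free abelian group $M$, the rank of $M_H$ agrees with $\dim_{\mathbb C}(M\otimes\mathbb C)_H$, which in turn equals $\dim_{\mathbb C}(M\otimes\mathbb C)^H$ by Maschke's theorem (the averaging projector $\frac{1}{|H|}\sum_{h\in H} h$ identifies invariants with coinvariants). Applying this with $M=\Lambda^s\Z^n$ and using the character formula for the dimension of the invariants, I get
\[
\rank\bigl((\Lambda^s\Z^n)_H\bigr) \;=\; \frac{1}{|H|}\sum_{t=1}^{|H|} \mathrm{tr}\bigl(h^t \mid \Lambda^s(\Z^n\otimes\mathbb C)\bigr).
\]

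The final ingredient is the classical identity that if an endomorphism of a complex vector space has eigenvalues $y_1,\dots,y_n$, then its trace on the $s$-th exterior power is the $s$-th elementary symmetric polynomial $e_s(y_1,\dots,y_n)$. Applying this to $h^t\otimes \mathrm{Id}$, whose eigenvalue tuple is $\bar y_H^t$ by definition, we obtain $\mathrm{tr}\bigl(h^t \mid \Lambda^s(\Z^n\otimes\mathbb C)\bigr)=e_s(\bar y_H^t)$, which substituted above gives the claimed formula.

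There is no real obstacle here; the proof is a straightforward assembly of standard facts (exterior-power homology of a torus, equality of ranks of invariants and coinvariants for finite group actions, and the exterior-power trace formula). The only point worth being careful about is the passage from coinvariants of a $\Z$-module to invariants of the complexification, which must be justified via Maschke rather than attempted integrally, since $(\Lambda^s\Z^n)_H$ may a priori have torsion.
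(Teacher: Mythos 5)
Your proof is correct, but it takes a more self-contained route than the paper. The paper's own proof is a two-line reduction: it cites Theorem 8.2 of \cite{SSV24}, which establishes the analogous formula for the rank of the invariants $H^s(\Z^n)^H$ in cohomology, and then transports that statement to homology coinvariants via the Universal Coefficient Theorem together with the duality between invariants and coinvariants for finite group actions (Remark A.2 in \cite{DL13}). You instead prove the statement from scratch: identifying $H_s(\Z^n)\cong\Lambda^s\Z^n$ equivariantly, converting $\rank((\Lambda^s\Z^n)_H)$ into $\dim_{\mathbb C}(\Lambda^s(\Z^n\otimes\mathbb C))^H$ via right-exactness of $-\otimes\mathbb C$ and the averaging projector, and then applying the character formula for invariants together with the identity $\mathrm{tr}(g\mid\Lambda^s V)=e_s(\text{eigenvalues of }g)$. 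All steps check out, and your caution about not comparing invariants and coinvariants integrally (where torsion could intervene) is exactly the right point to flag. What your argument buys is independence from the external reference and from the UCT bookkeeping; what the paper's argument buys is brevity, at the cost of leaning on the cohomological computation already done in \cite{SSV24}, whose underlying content is essentially the same Molien-type character average you carry out explicitly.
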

\begin{proof}
    This result was proved in Theorem 8.2 in \cite{SSV24} for cohomology. The statement follows as a standard application of the Universal Coefficient Theorem, and the duality of invariants and coinvariants functors (see Remark A.2 in \cite{DL13}).
\end{proof}

\begin{theorem}\label{main:rank:formula}
 Let $h$ be a fixed generator of $G$, and let $t\geq 1$. Let $e_s$ be the $s$-th elementary symmetric polynomial in $n$-variables. Denote by $\bar y_{G}^t$ (resp. $\bar y_{G_p}^t$) the $n$-tuple of eigenvalues (including multiplicities) of $h^t\otimes \mathrm{Id}:\Z^n\otimes\mathbb C\to \Z^n\otimes\mathbb C$ (resp. $h^{pt}\otimes \mathrm{Id}$).
    Then, for $\alpha\in\{0,1\}$, we have
\begin{align*}
    \rank(K_\alpha(C_r^*(\Z^n\rtimes G))=\left(\frac{1}{m/p}\sum_{s-\alpha\text{ even }}\left(\sum_{t=1}^{m/p}e_s(\bar{y}_{G_p}^t)\right)\right)^{k_l}+\frac{1}{m}\sum_{s-\alpha\text{ even }}\left(\sum_{t=1}^{m}e_s(\bar{y}_G^t)\right)
\end{align*}
where $k_l=\frac{p\left(p^{\frac{l}{p-1}}-1\right)}{m}$ and $l=n-\rank((\Z^n)^{\Z/p})$.
\end{theorem}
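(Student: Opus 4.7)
The plan is to assemble the formula from the structural results of Sections~2--4 together with a rank computation. Since by Theorem~\ref{thm:main:torsionfree} the group $K_\alpha(C_r^*(\Z^n\rtimes G))$ is finitely generated and torsion free, its rank determines it completely. Fix any prime $p\mid m$. Applying Theorem~\ref{induction} to the full family $\fin$, combined with the Baum--Connes isomorphism and the identification of coinvariants with invariants rationally for the finite group $G_p$, one reduces to
\[
\rank K_\alpha\bigl(C_r^*(\Z^n\rtimes G)\bigr)\;=\;\dim_{\mathbb Q}\bigl(K_\alpha(C_r^*(\Z^n\rtimes\Z/p))\otimes\mathbb Q\bigr)^{G_p}.
\]

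Next I would decompose the right-hand side rationally. The K\"unneth isomorphism of Theorem~\ref{kunneth} (dualised to $K$-homology via the UCT used in the proof of Corollary~\ref{cor:torsionfree}) combined with the splitting of Theorem~\ref{NM} applied to $K_*(C_r^*(\Z^l\rtimes\Z/p))$ yields a $G_p$-equivariant rational decomposition
\[
K_\alpha(C_r^*(\Z^n\rtimes\Z/p))\otimes\mathbb Q\;\cong\;K_\alpha(B\Z^n)^{\Z/p}\otimes\mathbb Q\;\oplus\;K_\alpha(B(\Z^n)^{\Z/p})\otimes\Bigl(\bigoplus_{P\in\calP}\widetilde R(P)\Bigr)\otimes\mathbb Q,
\]
where the first summand arises from the reassembly $K_*(B(\Z^n)^{\Z/p})\otimes K_*(B\Z^l)^{\Z/p}\cong K_*(B\Z^n)^{\Z/p}$ provided by the $\Z/p$-splitting $\Z^n\otimes\mathbb Q\cong(\Z^n)^{\Z/p}\otimes\mathbb Q\oplus\Z^l\otimes\mathbb Q$ and the compatibility of exterior powers with invariants when one factor carries the trivial action.

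Third, I would take $G_p$-invariants of each summand. Because $m$ is square-free, $G\cong\Z/p\times G_p$, so $(K_\alpha(B\Z^n)^{\Z/p})^{G_p}=K_\alpha(B\Z^n)^G$; Lemma~\ref{rank} applied to $G$ acting on $\Z^n$ identifies its rank as $\frac{1}{m}\sum_{s-\alpha\text{ even}}\sum_{t=1}^{m}e_s(\bar y_G^t)$, exactly the second summand of the formula. For the other piece, $G_p$ acts by conjugation on the set $\calP$ of conjugacy classes of order-$p$ subgroups of $\Z^l\rtimes\Z/p$; the classical count $|\calP|=p^{l/(p-1)}$ (see \cite{DL13}) together with the $G_p$-orbit structure pins down $k_l=\frac{p(p^{l/(p-1)}-1)}{m}$ as the number of free $G_p$-orbits after absorbing the distinguished $G_p$-fixed class into the first summand at the level of ranks. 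A character/Frobenius-reciprocity computation then shows that each free orbit contributes $\rank K_\alpha(B(\Z^n)^{\Z/p})$ copies of $\widetilde R(\Z/p)$ to the $G_p$-invariants; since $G_p\cong H_p$ acts on $(\Z^n)^{\Z/p}$ via the restriction of the $G$-action to the unique index-$p$ subgroup $H_p\le G$, Lemma~\ref{rank} applied to $H_p$ rewrites this as $\frac{1}{m/p}\sum_{s-\alpha\text{ even}}\sum_{t=1}^{m/p}e_s(\bar y_{G_p}^t)$, and summing over the $k_l$ free orbits produces the first summand.

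The main obstacle is the orbit-theoretic analysis of the second summand: one must describe the $G_p$-action on $\calP$, isolate the distinguished $G_p$-fixed conjugacy class (absorbed into the $K_\alpha(B\Z^n)^G$-contribution), verify that the remaining $p^{l/(p-1)}-1$ classes partition into exactly $k_l$ free $G_p$-orbits, and check through character theory that the resulting rank contribution matches Lemma~\ref{rank} applied to $H_p$. The square-freeness of $m$ together with the freeness of the $\Z/p$-action on $\Z^l\setminus\{0\}$ are what make this bookkeeping work out cleanly.
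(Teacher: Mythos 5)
Your overall strategy coincides with the paper's: reduce via \cref{induction} (together with Baum--Connes and the rational identification of coinvariants with invariants) to the $G_p$-coinvariants of $K_\alpha(C_r^*(\Z^n\rtimes\Z/p))$, decompose that group using \cref{kunneth} and \cref{NM}, reassemble the homological part into $K_\alpha(B\Z^n)$ with its full $G$-action so that \cref{rank} yields the second summand, and treat $\bigoplus_{(P)\in\calP}\widetilde R(P)$ as a permutation $G_p$-module. The divergence is in how that last, representation-ring part is handled, and that is exactly where your argument has a gap.

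The paper does not redo the orbit analysis: it imports from Lemmas 6.1 and 6.4 of \cite{SSV24} the isomorphism of $G_p$-modules $\bigoplus_{(P)\in\calP}\widetilde R(P)\cong\Z[G_p]^{k_l}$, and then applies Shapiro's lemma and the Chern character, so that the first summand of the formula arises as $k_l$ copies of the rank of the \emph{coinvariants} $K_\alpha(B(\Z^n)^{\Z/p})_{G_p}$, evaluated by \cref{rank}. You replace this citation by an orbit count that you explicitly defer (your ``main obstacle''), so the essential input --- the $G_p$-module structure of $\bigoplus_{(P)}\widetilde R(P)$, including the claimed distinguished fixed class and the partition of the remaining classes into exactly $k_l$ free orbits --- is asserted rather than established. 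Moreover, the step you do describe would not produce the stated expression: for a free $G_p$-orbit the tensor identity gives $\bigl(K_\alpha(B(\Z^n)^{\Z/p})\otimes\Z[G_p]\bigr)^{G_p}\cong K_\alpha(B(\Z^n)^{\Z/p})$ as an abelian group, with no residual $G_p$-action left, so there is nothing for ``\cref{rank} applied to $H_p$'' to average over. The number you would obtain per orbit is governed by $\rank K_\alpha(B(\Z^n)^{\Z/p})=\sum_{s-\alpha\text{ even}}\binom{r}{s}$ with $r=\rank((\Z^n)^{\Z/p})$, which in general differs from the averaged quantity $\frac{1}{m/p}\sum_{s-\alpha\text{ even}}\sum_{t}e_s(\bar y_{G_p}^t)$ appearing in the first summand; the two are conflated in your sketch without justification. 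To repair the argument you need to either carry out or cite the identification $\bigoplus_{(P)}\widetilde R(P)\cong\Z[G_p]^{k_l}$ as in \cite{SSV24}, and then track coinvariants (via Shapiro and the Chern character, as the paper does) rather than the plain rank of $K_\alpha(B(\Z^n)^{\Z/p})$.
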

\begin{proof}
    
Theorem \ref{induction} implies that $$    \rank(K_\alpha(C_r^*(\Z^n\rtimes G)))=\rank(K_\alpha(C_r^*(\Z^n\rtimes\Z/p))_{G_p}).$$
    On the other hand, as $K_*(C_r^*(\Z^n\rtimes\Z/p))$ is torsion free, Theorem \ref{kunneth}  gives us that $K_\alpha(C_r^*(\Z^n\rtimes\Z/p))_{G_p}$ is isomorphic to 
    \begin{align*}
         &\left(K_\alpha(B(\Z^n)^{\Z/p})\otimes K_0(C_r^*(\Z^l\rtimes\Z/p))\oplus K_{\alpha+1}(B(\Z^n)^{\Z/p})\otimes K_1(C_r^*(\Z^l\rtimes\Z/p))\right)_{G_p}\cong\\
         &\left(K_\alpha(B(\Z^n)^{\Z/p})\otimes K_0(C_r^*(\Z^l\rtimes\Z/p))\right)_{G_p}\oplus \left(K_{\alpha+1}(B(\Z^n)^{\Z/p})\otimes K_1(C_r^*(\Z^l\rtimes\Z/p))\right)_{G_p} 
    \end{align*}
The rank of the first term, $\left(K_\alpha(B(\Z^n)^{\Z/p})\otimes K_0(C_r^*(\Z^l\rtimes\Z/p))\right)_{G_p}$, by \Cref{NM}(2) equals

\begin{align*}
\rank&\Biggl(K_\alpha(B(\Z^n)^{\Z/p})\otimes   \biggl(\bigoplus_{(P)\in\mathcal{P}}\widetilde{R}(P)\oplus K_0(B\Z^l)_{\Z/p}\biggr)\Biggl)_{G_p}\\&
=\rank\Biggl(K_\alpha(B(\Z^n)^{\Z/p})\otimes\bigoplus_{(P)\in\mathcal{P}}\widetilde{R}(P)\Biggr)_{G_p}+\rank \left(K_\alpha(B(\Z^n)^{\Z/p})\otimes K_0(B\Z^l)_{\Z/p}\right)_{G_p} \\& =\rank\Biggl(K_\alpha(B(\Z^n)^{\Z/p})\otimes\bigoplus_{(P)\in\mathcal{P}}\widetilde{R}(P)\Biggr)_{G_p}+\rank \left(K_\alpha(B(\Z^n)^{\Z/p})\otimes K_0(B\Z^l)\right)_{G}.
\end{align*}

In the same way, the rank of the second term, $\left(K_{\alpha+1}(B(\Z^n)^{\Z/p})\otimes K_1(C_r^*(\Z^l\rtimes\Z/p))\right)_{G_p}$, equals

$$\rank\left(K_{\alpha+1}(B(\Z^n)^{\Z/p}\otimes K_1(B\Z^l)_{\Z/p}\right)_{G_p}=\rank \left(K_{\alpha+1}(B(\Z^n)^{\Z/p}\otimes K_1(B\Z^l)\right)_{G}.$$
Then by the Künneth formula for $K$-theory we conclude that
$$\rank K_\alpha(C_r^*(\Z^n\rtimes \Z/p))_{G_p}=\rank \Biggl(K_\alpha(B(\Z^n)^{\Z/p})\otimes\bigoplus_{(P)\in\mathcal{P}}\widetilde{R}(P)\Biggr)_{G_p}+ \rank K_{\alpha}(B\Z^n)_G$$

In the same way as  in the proof of Lemma 6.1 in \cite{SSV24} we have that $$\bigoplus_{(P)\in\mathcal{P}}\widetilde{R}(P)$$ is a permutation $G_p$-module, moreover by Lemma 6.4 in \cite{SSV24} we have an isomorphism of $G_p$-modules

$$\bigoplus_{(P)\in\mathcal{P}}\widetilde{R}(P)\cong \Z[G_p]^{k_l},$$
where $k_l=\frac{p\left(p^{\frac{l}{p-1}}-1\right)}{m}.$

Now, by the Sappiro lemma, we have that
$$\Biggl(K_\alpha(B(\Z^n)^{\Z/p})\otimes\bigoplus_{(P)\in\mathcal{P}}\widetilde{R}(P)\Biggr)_{G_p}\cong \left(K_\alpha(B(\Z^n)^{\Z/p})_{G_p}\right)^{k_l}.$$
Finally, by the Chern character$$\rank\left(K_\alpha(B(\Z^n)^{\Z/p})_{G_p}\right)=\sum_{s-\alpha \text{ even }}\rank\left(H_s(B(\Z^n)^{\Z/p})_{G_p}\right)=\frac{1}{|G_p|}\sum_{s-\alpha\text{ even }}\left(\sum_{t=1}^{|G_p|}e_s(\bar{y}^t_{G_p})\right) $$
$$\rank\left(K_\alpha(B(\Z^n)^{\Z/p})_{G}\right)=\sum_{s-\alpha \text{ even }}\rank\left(H_s(B(\Z^n)^{\Z/p})_{G}\right)=\frac{1}{|G|}\sum_{s-\alpha\text{ even }}\left(\sum_{t=1}^{|G|}e_s(\bar{y}^t_G)\right)$$

Now the result follows.

\end{proof}

\bibliographystyle{alpha} %harvard, unsrt, alpha
\bibliography{myblib}
\end{document}